\newtheorem{theorem}{Theorem}[section]
\newtheorem{lemma}[theorem]{Lemma}
\theoremstyle{remark}
\definecolor{darkred}{rgb}{.7,0,0}
\definecolor{green}{rgb}{0,0.7,0}
\def\R{\mathbb{R}}
\def\d{\,{\rm d}}
\def\le{\leqslant}
\def\ge{\geqslant}
\def\Omega{\varOmega}
\def\Delta{\varDelta}
\def\Lambda{\varLambda}
\title{ 
Weak discrete maximum principle of\\ 
finite element methods in convex polyhedra } 
\author[Dmitriy Leykekhman]{Dmitriy Leykekhman}
\address{Department of Mathematics, University of Connecticut, Storrs, CT 06269, USA. } 
\email {\href{mailto:dmitriy.leykekhman@uconn.edu}{dmitriy.leykekhman@uconn.edu}}
\author[Buyang Li]{Buyang Li}
\address{Department of Applied Mathematics, 
The Hong Kong Polytechnic University, Hung Hom, Hong Kong.} 
\email {\href{mailto:buyang.li@polyu.edu.hk}{buyang.li{\it @}polyu.edu.hk}}
\thanks{This work is partially supported by NSF DMS-1913133 and a Hong Kong RGC grant (project no. 15300519). }
\begin{document}

\maketitle

\begin{abstract}
\small  
We prove that the Galerkin finite element solution $u_h$ of the Laplace equation in a convex polyhedron $\varOmega$, with a quasi-uniform tetrahedral partition of the domain and with finite elements of polynomial degree $r\ge 1$, satisfies the following weak maximum principle:
\begin{align*}
\left\|u_{h}\right\|_{L^{\infty}(\varOmega)} \le 
C\left\|u_{h}\right\|_{L^{\infty}(\partial \varOmega)} , 
\end{align*}
with a constant $C$ independent of the mesh size $h$. By using this result, we show that the Ritz projection operator $R_h$ is stable in $L^\infty$ norm uniformly in $h$ for $r\geq 2$, i.e.
\begin{align*}
\|R_hu\|_{L^{\infty}(\varOmega)} \le 
C\|u\|_{L^{\infty}(\varOmega)} .
\end{align*}
Thus we remove a logarithmic factor appearing in the previous results for convex polyhedral domains. 
\end{abstract}

\pagestyle{myheadings}
\markboth{}{}

\section{Introduction}
\setcounter{equation}{0}

Let $S_h$ be a finite element space of Lagrange elements of degree $r\ge 1$ subject to a quasi-uniform tetrahedral partition $\mathfrak{T}$ of a convex polyhedron $\Omega\subset\R^3$, where $h$ denotes the mesh size of the tetrahedral partition, and quasi-uniformity means that
$$
\rho_\tau \ge c h \quad\forall\,\tau\in\mathfrak{T},
$$
with $\rho_\tau$ denoting the radius of the largest ball inscribed in the tetrahedron $\tau\in\mathfrak{T}$. 

Let $\mathring S_h$ be the subspace of $S_h$ consisting of functions with zero boundary values. A function $u_h\in S_h$ is called a discrete harmonic if it satisfies  
\begin{align}\label{discrete-harmonic}
(\nabla u_h,\nabla \chi_h)=0\quad\forall\,\chi_h\in\mathring S_h.
\end{align}
In this article, we establish the following result, which we call weak maximum principle of finite element methods (for higher order equations it is often called Agmon--Miranda maximum principle).  
\begin{theorem}\label{THM1}
A discrete harmonic function $u_h$ satisfies the following estimate:
\begin{align}\label{weak-max-principle}
\left\|u_{h}\right\|_{L^{\infty}(\Omega)} \leqslant C\left\|u_{h}\right\|_{L^{\infty}(\partial \Omega)} ,
\end{align}
where the constant $C$ is independent of the mesh size $h$. 
\end{theorem}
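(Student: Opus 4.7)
The plan is a duality argument built around a regularized discrete Green's function and a boundary-interior splitting of $u_h$. Fix a point $x_0\in\Omega$ at which $|u_h(x_0)|=\|u_h\|_{L^\infty(\Omega)}$; without loss of generality it lies in the open interior of a single tetrahedron $\tau_0$. Construct a smooth ``regularized delta'' $\delta_{x_0}$ supported in $\tau_0$ that reproduces values of polynomials of degree $r$, i.e.\ $(\delta_{x_0},\chi_h)=\chi_h(x_0)$ for all $\chi_h\in S_h$ (this is a standard biorthogonal construction on the reference element). Then define the discrete Green's function $G_h\in\mathring S_h$ by $(\nabla G_h,\nabla\chi_h)=(\delta_{x_0},\chi_h)$ for all $\chi_h\in\mathring S_h$.

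Next, split $u_h=u_h^0+b_h$, where $b_h\in S_h$ agrees with $u_h$ at every boundary node and is zero at every interior node, and $u_h^0=u_h-b_h\in\mathring S_h$. By locality of the Lagrange basis, $b_h$ is supported in the one-layer boundary strip $\Omega_h:=\{x\in\Omega:\mathrm{dist}(x,\partial\Omega)\le Ch\}$ and satisfies $\|b_h\|_{L^\infty(\Omega)}\le C\|u_h\|_{L^\infty(\partial\Omega)}$. Using the discrete harmonicity of $u_h$ against $G_h\in\mathring S_h$ and the Green's function identity,
$$
u_h(x_0) = u_h^0(x_0) + b_h(x_0) = (\nabla G_h,\nabla u_h^0) + b_h(x_0) = b_h(x_0) - (\nabla G_h,\nabla b_h).
$$
The term $b_h(x_0)$ is directly bounded by $C\|u_h\|_{L^\infty(\partial\Omega)}$ (it vanishes unless $\tau_0$ touches $\partial\Omega$), so the whole problem reduces to estimating $|(\nabla G_h,\nabla b_h)|$. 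An inverse inequality together with the support and the $L^\infty$ bound on $b_h$ gives
$$
|(\nabla G_h,\nabla b_h)| \le \|\nabla b_h\|_{L^\infty(\Omega_h)}\|\nabla G_h\|_{L^1(\Omega_h)} \le Ch^{-1}\|u_h\|_{L^\infty(\partial\Omega)}\,\|\nabla G_h\|_{L^1(\Omega_h)},
$$
reducing Theorem~\ref{THM1} to the sharp boundary-strip estimate
$$
\|\nabla G_h\|_{L^1(\Omega_h)} \le C h.
$$

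The main obstacle is this last estimate, which is the technical heart of the proof and is precisely where the logarithm in previous results was lost. The strategy would be: (i) establish sharp pointwise and weighted bounds on the continuous Green's function $G(x_0,\cdot)$ and $\nabla G(x_0,\cdot)$ in the convex polyhedron, controlling the behavior near edges and vertices via the regularity theory of elliptic problems on convex polyhedral domains (so that $\nabla G$ decays like $|x_0-y|^{-2}$ without logarithmic correction, even through the singular set); (ii) prove finite element approximation bounds for $G-G_h$ in weighted $L^\infty$/$L^1$ norms of comparable sharpness; (iii) assemble these with a dyadic decomposition of $\Omega_h$ in shells around $x_0$ to bound the integral. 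The convexity is indispensable: it upgrades the angular-weighted regularity of the continuous Green's function at edges and vertices just enough that each dyadic shell contributes an $O(h)$ piece, and the resulting geometric series sums without a $\log(1/h)$ factor. The delicate balance between finite element error near the singularity of $G$ and the $L^1$-integrability near $\partial\Omega$ is what makes the convex-polyhedral case strictly easier than the general polyhedral one, and exploiting this cleanly is where the bulk of the work will lie.
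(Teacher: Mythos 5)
Your reduction of the theorem to a discrete Green's function estimate is correct and is in fact a genuinely different (and arguably cleaner) way to set up the problem than the paper's. You split $u_h=u_h^0+b_h$, use discrete harmonicity against $G_h$ to obtain
$$
u_h(x_0)=b_h(x_0)-(\nabla G_h,\nabla b_h),
$$
and H\"older plus inverse inequality to reach the target estimate $\|\nabla G_h\|_{L^1(\Omega_h)}\le Ch$. The algebra is sound: $(\delta_{x_0},u_h^0)=(\nabla G_h,\nabla u_h^0)=-(\nabla G_h,\nabla b_h)$ since $(\nabla u_h,\nabla G_h)=0$, and $\|\nabla b_h\|_{L^\infty}\le Ch^{-1}\|u_h\|_{L^\infty(\partial\Omega)}$ because $b_h$ is a Lagrange function supported in the boundary strip with nodal values bounded by $\|u_h\|_{L^\infty(\partial\Omega)}$. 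By contrast, the paper goes through an interior $L^\infty$-to-$L^2$ estimate at $x_0$, then dualizes the local $L^2$ norm against a generic $\varphi\in L^2(S_\rho(x_0))$, and reduces to $\rho^{-3/2}h^{-1}\|\nabla(v-v_h)\|_{L^1(\Lambda_h)}\le C$; your $G_h$ is in effect the special case $\rho\approx h$ with the $L^1$-normalized (rather than $L^2$-normalized) source.

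The gap is that you do not prove $\|\nabla G_h\|_{L^1(\Omega_h)}\le Ch$, and the outline you give for it is exactly the place where prior results (e.g.\ Leykekhman--Vexler, 2016) pick up a logarithmic factor. Splitting $\nabla G_h=\nabla G+\nabla(G-G_h)$, the continuous piece $\|\nabla G\|_{L^1(\Omega_h)}\le Ch$ is plausible from weighted Green's function bounds (image-charge decay $|\nabla_x G(x,x_0)|\lesssim d(x_0)|x-x_0|^{-3}$ near $\partial\Omega$), but the finite element error $\|\nabla(G-G_h)\|_{L^1(\Omega_h)}$ is the hard term. A straight dyadic decomposition in $L^2$ around $x_0$, combined with $H^2$ regularity of the pole function and interior error estimates, gives a sum of the form $\sum_j (h/d_j)^{1/2}\cdot(\text{something bounded})$ only after the local $L^2$ norms of $v-v_h$ are controlled; without a sharper-than-$L^2$ ingredient those local norms produce $O(1)$ on each shell and the $J\sim\log(1/h)$ shells accumulate a logarithm. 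The paper's essential new idea, not present in your sketch, is to carry the $L^2$ estimates of $v-v_h$ on the shells down to $L^p$ estimates for some $1<p<2$ via interpolation, a cut-off and absorption argument, and a duality step that exploits the $W^{2,p}$ regularity of the Poisson problem on a convex polyhedron (Lemma~\ref{lemma: W2p regularity} / Dauge). This yields shell-by-shell bounds of the form $(h/\rho)^{3/2}(\rho/d_j)^{\beta}$ with $\beta=\tfrac{3}{p}-\tfrac{3}{2}-\tfrac{\theta\sigma}{1-\theta}>0$, which is what makes the geometric series converge without a log. Your appeal to ``convexity upgrades the angular-weighted regularity ... just enough'' names the right phenomenon but does not supply the mechanism; as written, step (iii) of your outline would reproduce the logarithm rather than remove it.
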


As an application of the weak maximum principle, we show that the Ritz projection $R_h:H^1_0(\Omega)\rightarrow \mathring S_h$ defined by
$$
(\nabla(u-R_hu),\nabla v_h)=0\quad\forall\, v_h\in\mathring S_h
$$
is stable in $L^\infty$ norm for  finite elements of degree $r\ge 2$, i.e. 
\begin{equation*}
\|R_hu\|_{L^\infty(\Omega)} \le C\|u\|_{L^\infty(\Omega)}
\quad\forall\, u\in H^1_0(\Omega)\cap L^\infty(\Omega). 
\end{equation*}
Although this result is well-known for smooth domains \cite{Schatz-Wahlbin-1977, Schatz-Wahlbin-1982}, for convex polyhedral domains the result was available only with an additional logarithmic factor \cite[Theorem 12]{Leykekhman_Vexler_2016}.

In the finite element literature, the ``strict" discrete maximum principle
$$
\|u_{h}\|_{L^{\infty}(\Omega)} \leqslant 
\|u_{h}\|_{L^{\infty}(\partial \Omega)} 
$$
i.e., with $C=1$ in \eqref{weak-max-principle}, has attracted a lot of attention; see \cite{Ciarlet_1970, CiarletRaviart_1973, Ruas Santos_1982, Vanselow_2001, WangZhang_2012}, to mention a few. However, the sufficient conditions for the strict discrete maximum principle often put serious restrictions on the geometry of the mesh. 
For piecewise linear elements in two-dimensions, the strict discrete maximum principle generally requires the angles of the triangles to be less than $\pi/2$, or the sum of opposite angles of the triangles that share an edge to be less than $\pi$ (for example, see \cite[\textsection 5]{WangZhang_2012}), though these conditions are not necessary away from the boundary \cite{2004-Draganescu}. For quadratic elements in two dimensions, discrete maximum principe holds only  for equilateral triangles \cite{Hohn_Mittelmann_1981}. The situation in three dimensions is more complicated \cite{Brandts_Korotov_Krizek_2009, Korotov_Krizek_2001, Korotov_Krizek_Pekka_2001, Xu_Zikatanov_1999}, essentially  it is hard to guarantee the discrete maximum principe even for piecewise linear elements.

A different approach was taken in the work of Schatz \cite{Schatz-1980}, who proved that a weak maximum principle in the sense of \eqref{weak-max-principle} holds for a wide class of finite elements on general quasi-uniform triangulation of any two dimensional polygonal domain. The weak maximum principle was used to established the stability of the Ritz projection in $L^\infty$ and $W^{1,\infty}$ norms for two-dimensional polygons. 
Such $L^\infty$- and $W^{1,\infty}$-stability results have a wide range of applications, for example to pointwise error estimates of finite element methods for parabolic problems \cite{2019-Li, Kashiwabara-Kemmochi, Leykekhman_Vexler_2016b},  Stokes systems \cite{Behringer_Leykekhman_Vexler_2019}, nonlinear problems \cite{Frehse_Rannacher_1978, Demlow_2006, Meinder_Vexler_2016b}, obstacle problems \cite{Christof_2017}, optimal control problems \cite{Apel_Rosch_Sirch_2009, Apel_Winkler_Pfefferer_2018}, to name a few.
As far as we know, \cite{Schatz-1980} is the only paper that establishes weak maximum principle and $L^\infty$ stability estimate (without the logarithmic factor) for the Ritz projection on nonsmooth domains.

In three dimensions the situation is less satisfactory. The stability of the Ritz projection in $L^\infty$ and $W^{1,\infty}$ norms is available on smooth domains \cite{Schatz-Wahlbin-1977, Schatz-Wahlbin-1982} and convex polyhedral domains \cite{Guzman_Leykekhman_Rossman_Schatz_2009,Leykekhman_Vexler_2016}. However, on convex polyhedral domains in \cite{Leykekhman_Vexler_2016}, the $L^\infty$-stability constant depends logarithmically on the mesh size $h$, and it is not obvious how the logarithmic factor can be removed there. There are no results on the weak maximum principles in  three dimensions even on smooth domains or convex polyhedra. 
The objective of this paper is to close this gap for convex polyhedral domains. 
In order to obtain the result, we have to modify the argument in \cite{Schatz-1980} by extending the arguments to $L^p$ norm for some $1<p<2$. This constitutes the main technical difficulty in the analysis of the paper. The mere adaptation of  the  $L^2$-norm based argument used in \cite{Schatz-1980} for  convex polyhedral domains, would yield a logarithmic factor. 
Unfortunately, the current analysis does not allow us to extend the results to nonconvex polyhedral domains or graded meshes. These would be the subject of future research. 

The paper is organized as follows. In section \ref{sec: preliminary} we state some preliminary results that we use later in our arguments. In section \ref{sec: start of the proof}, we reduce the proof of the weak discrete maximum principle to a specific error estimate. Section  \ref{sec: main proof} is devoted to the proof of this estimate, which constitutes the main technical part of the paper. Finally, section \ref{sec: stability Ritz}, gives an application of the weak discrete maximum principle to showing the stability of the Ritz projection in $L^\infty$ norm uniformly in $h$ for higher order elements.

In the rest of this article, we denote by $C$ a generic positive constant, which may be different at different occurrences but will be independent of the mesh size $h$. 

\section{Preliminary results} \label{sec: preliminary}
 
In this section, we present several well-known results that are used in our analysis. 
First result concerns global regularity of the weak solution  $v\in H^1_0(\Omega)$ to the problem 
\begin{align}\label{weak-PDE} 
(\nabla v,\nabla\chi)=(f,\chi) 
\quad\forall\,\chi\in H^1_0(\Omega) .
\end{align} 
 On the general convex domains we naturally have  
 the $H^2$ regularity  (cf. \cite{Grisvard_1985}). 
However, on convex polyhedral domains, we have the following sharper $W^{2,p}(\Omega)$ regularity result (cf. \cite[Corollary 3.12]{1992-Dauge}). 
\begin{lemma}\label{lemma: W2p regularity}
Let $\Omega$ be a convex polyhedron. Then there exists a constant $p_0>2$ depending on $\Omega$ such that for any $1<p<p_0$ and $f\in L^{p}(\Omega)$, the solution $v$ of \eqref{weak-PDE} is in $W^{2,p}(\Omega)$ and 
$$
\|v\|_{W^{2,p}(\Omega)}
\le
C\|f\|_{L^p(\Omega)}.
$$
\end{lemma}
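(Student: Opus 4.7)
The plan is to localize via a partition of unity adapted to the geometry of $\Omega$ and reduce to three model problems: interior/smooth-face regularity, a dihedral wedge near an open edge, and an infinite polyhedral cone near a vertex. I would then take $p_0$ to be the minimum over the finitely many edges and vertices of the critical exponents arising from each local model. Concretely, fix a covering $\{U_j\}$ of $\overline\Omega$ with a subordinate partition of unity $\{\phi_j\}$, where each $U_j$ is either interior to $\Omega$, touches only the relative interior of one face, touches exactly one open edge together with its two adjoining faces, or is a small neighborhood of a single vertex. Setting $v_j=\phi_j v$, each $v_j$ solves a Dirichlet problem with right-hand side $\phi_j f - 2\nabla\phi_j\cdot\nabla v - v\Delta\phi_j$; a bootstrap from the $H^1$ a priori bound combined with Sobolev embeddings in $\R^3$ shows that these commutator terms lie in $L^p$ for $p$ in the relevant range. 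On interior and single-face pieces, classical Calder\'on--Zygmund theory (with a half-space reflection on the face pieces) yields $v_j\in W^{2,p}$ for every $1<p<\infty$.

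On an edge piece, after flattening the two adjacent faces I reduce, modulo the lower-order perturbations absorbed as above, to the Dirichlet Laplacian on a two-dimensional wedge $W_{\omega_e}$ of opening $\omega_e<\pi$ extended trivially in the tangential direction. The singular exponents of this wedge are $k\pi/\omega_e$ for $k\ge 1$, and the local solution lies in $W^{2,p}$ provided none of them falls in the critical interval $(0,2-2/p)$; since $\omega_e<\pi$, the threshold $p_e=2/(2-\pi/\omega_e)$ strictly exceeds $2$, so there is room for some $p>2$.

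The main obstacle, and the source of the geometric restriction on $p_0$, is the vertex case, where the model is the Dirichlet Laplacian on an infinite cone $K$ with cross-section $G\subset S^2$ whose boundary is a spherical polygon. Here I would invoke Kondratiev theory in weighted Sobolev spaces: passing to spherical coordinates and applying the Mellin transform in the radial variable reduces the problem to a holomorphic operator pencil on $G$ whose eigenvalues are the singular exponents $\mu$ solving $\mu(\mu+1)=\lambda_k(G)$, with $\lambda_k(G)$ the $k$th Dirichlet eigenvalue of $-\Delta_{S^2}$ on $G$. The Mellin multiplier representing $D^2(-\Delta)^{-1}$ is bounded on $L^p(K)$ provided no exponent $\mu$ of the pencil lies in a certain critical strip depending on $p$. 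Convexity of $\Omega$ at the vertex forces $\lambda_1(G)>2$, the threshold realized by the half-space, so the smallest positive exponent $\mu_1$ strictly exceeds $1$, leaving a non-empty range $2<p<p_0$ in which this condition holds. Taking $p_0$ as the minimum of the edge thresholds $p_e$ and these vertex thresholds, patching the $v_j$ back together, and absorbing the commutator terms using the $H^1$ bound completes the proof.
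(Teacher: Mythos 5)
The paper does not prove this lemma; it is invoked as a known result, cited from Dauge's paper (Corollary 3.12 of reference \cite{1992-Dauge}), which in turn rests on the Kondratiev/Maz'ya--Plamenevskii weighted-Sobolev theory for boundary value problems in domains with conical and edge singularities. Your sketch reconstructs essentially that standard machinery: a partition of unity adapted to the strata (interior, face interior, edge, vertex), reduction to constant-coefficient model problems on wedges and cones, Mellin transform and the operator pencil on the spherical cross-section, and the observation that convexity controls the location of the singular exponents. The geometric core of your argument is correct and is exactly what makes $p_0>2$ possible: every dihedral angle $\omega_e<\pi$ forces the first wedge exponent $\pi/\omega_e>1$, giving an edge threshold $p_e=2/(2-\pi/\omega_e)>2$; and every vertex cone is strictly contained in a half-space, so domain monotonicity gives $\lambda_1(G)>2$ and hence $\mu_1>1$, giving a vertex threshold strictly above $3$.

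Two places where the sketch compresses real work. First, the Mellin/Kondratiev machinery naturally produces estimates in weighted spaces of $V^{2,p}_\beta$ type, and the transfer to the unweighted $W^{2,p}(\Omega)$ estimate stated in the lemma requires showing that, once all singular exponents clear the critical strip, the relevant weighted and unweighted norms are equivalent on the solution; this step is not automatic and is where much of the labor in the cited reference lies. Second, near a vertex you are simultaneously near the incident edges, so the edge and vertex analyses cannot be run on disjoint pieces of the partition of unity; the standard treatment uses nested radial weights (a vertex weight and edge weights) and verifies both admissibility conditions in the common $p$-range. Neither omission indicates a wrong approach, only unfinished bookkeeping. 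One small mischaracterization: for a convex polyhedron the edge thresholds can be arbitrarily close to $2$ as $\omega_e\to\pi^-$, whereas the vertex thresholds are uniformly above $3$, so the binding constraint on $p_0$ is typically the edges rather than the vertices, contrary to your framing of the vertex case as the main obstacle.
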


For any point $x^*\in \overline\Omega$ we denote $S_d(x^*)=\{x\in\Omega:|x-x^*|<d\}$. The following result, which is a version of the Poincare inequality, is an extension of Lemma 1.1 in \cite{Schatz-1980}, which was established in two dimensions for $p=2$.

\begin{lemma}\label{W1p0-d}
Let $1<p<\infty$. 
If $\chi\in W^{1,p'}_0(\Omega)$ and $x^*\in \partial\Omega$, then  
$$
\|\chi\|_{L^{p}(S_{d_*}(x^*))} 
\le
Cd_*\|\nabla\chi\|_{L^{p}(\Omega)}.
$$
\end{lemma}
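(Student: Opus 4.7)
The statement is a boundary Poincar\'e-type inequality: since $\chi$ vanishes on $\partial\Omega$ and $x^*\in\partial\Omega$, the $L^p$ norm of $\chi$ on $S_{d_*}(x^*)$ should be controlled by $d_*$ times the $L^p$ norm of $\nabla\chi$. My plan is to combine two classical ingredients.

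First, for every $x\in S_{d_*}(x^*)$ the triangle inequality gives
$$
\delta(x):=\mathrm{dist}(x,\partial\Omega)\le |x-x^*|<d_* ,
$$
so the distance-to-boundary function is uniformly bounded by $d_*$ on $S_{d_*}(x^*)$. Second, since $\Omega$ is a convex polyhedron and hence a bounded Lipschitz domain, the classical Hardy inequality is available: for every $1<p<\infty$,
$$
\int_\Omega \frac{|\chi(x)|^p}{\delta(x)^p}\,\mathrm{d}x \le C\int_\Omega |\nabla\chi(x)|^p\,\mathrm{d}x \qquad\forall\,\chi\in W^{1,p}_0(\Omega),
$$
with a constant $C$ depending only on $\Omega$ and $p$.

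Combining the two, I would split $|\chi|^p=\delta^p\cdot(|\chi|/\delta)^p$, use $\delta\le d_*$ on $S_{d_*}(x^*)$ to pull the factor $d_*^p$ out of the integral, enlarge the domain of integration to all of $\Omega$, and invoke Hardy:
$$
\|\chi\|_{L^p(S_{d_*}(x^*))}^p
\le d_*^p\int_{S_{d_*}(x^*)} \frac{|\chi(x)|^p}{\delta(x)^p}\,\mathrm{d}x
\le d_*^p\int_\Omega \frac{|\chi(x)|^p}{\delta(x)^p}\,\mathrm{d}x
\le C d_*^p\,\|\nabla\chi\|_{L^p(\Omega)}^p .
$$
Taking $p$-th roots yields the claimed bound.

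The proof has essentially only one nontrivial ingredient --- Hardy's inequality on a bounded Lipschitz domain --- which is standard, so I do not anticipate a genuine obstacle. (Strictly speaking the hypothesis as stated, $\chi\in W^{1,p'}_0(\Omega)$, should be read as $\chi\in W^{1,p}_0(\Omega)$ in order for the right-hand side of the inequality to be meaningful; the argument uses only that $\chi$ has vanishing trace on $\partial\Omega$ and that $\nabla\chi\in L^p(\Omega)$.)
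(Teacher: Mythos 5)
Your proof is correct, and it takes a genuinely different route from the paper. The paper argues directly in spherical coordinates centered at $x^*$: it extends $\chi$ by zero, observes that for each shell $|x-x^*|=\rho$ and each polar angle $\varphi$ there is an azimuthal angle $\theta^*$ at which the extended function vanishes, integrates $\partial_\theta\widetilde\chi$ from $\theta^*$, bounds $|\partial_\theta\widetilde\chi|\le\rho|\nabla\widetilde\chi|$ by the chain rule, and applies H\"older. Your proof replaces all of this by a single application of Hardy's inequality on a bounded Lipschitz domain,
\[
\int_\Omega \frac{|\chi|^p}{\delta^p}\,{\rm d}x \le C\int_\Omega |\nabla\chi|^p\,{\rm d}x ,
\]
together with the elementary observation that $\delta(x)\le |x-x^*|<d_*$ on $S_{d_*}(x^*)$ since $x^*\in\partial\Omega$. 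Both arguments are valid; what each buys: the paper's argument is self-contained and uses only one-dimensional calculus and H\"older, at the price of a coordinate construction that must be set up with some care (one needs the azimuthal circle at each $(\rho,\varphi)$ to actually exit $\Omega$, which constrains the choice of polar axis relative to a supporting hyperplane at $x^*$). Your argument is shorter, more robust, and works uniformly for any bounded Lipschitz domain with a constant visibly independent of $x^*$, at the price of invoking Hardy's inequality as an external black box; since the paper elsewhere already relies on nontrivial PDE machinery (Jerison--Kenig, Dauge), this is not a heavier dependence than what is already present. Your remark that the hypothesis $\chi\in W^{1,p'}_0(\Omega)$ should read $\chi\in W^{1,p}_0(\Omega)$ for the estimate to make sense is also correct; the paper's own proof immediately passes to $\chi\in C_0^\infty(\Omega)$ and proves an $L^p$-to-$L^p$ bound, and in the application (Lemma 2.3) the lemma is invoked with $p$ replaced by $p'$ throughout.
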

\begin{proof}
Similarly to \cite[Lemma~1.1]{Schatz-1980}, we consider $\chi\in C_0^\infty(\Omega)$ and extend $\chi$ by zero outside $\Omega$. By denoting $x^*=(x^*_{1},x^*_{2},x^*_{3})$ and using the spherical coordinates centered at $x^*$, we define  
$$
\widetilde\chi(\rho,\varphi,\theta)
=
\chi(x^*_{1}+\rho\sin(\varphi)\cos(\theta),
x^*_{2}+\rho\sin(\varphi)\sin(\theta),
x^*_{3}+\rho\cos(\varphi) ) ,
$$
for $0\le\rho\le d_*$, and some $\varphi\in[0,\pi]$ and $\theta\in[0,2\pi]$. Since $\chi=0$ on $\partial\Omega$, there exists $\theta^* \in[0,2\pi]$ such that $\widetilde\chi(\rho,\varphi,\theta^*)=0$. Therefore, 
$$
|\widetilde\chi(\rho,\varphi,\theta)|
=\bigg|\int_{\theta^*}^{\theta}
\partial_\theta \widetilde\chi(\rho,\varphi,\theta')\d\theta'\bigg|
\le
\int_{0}^{2\pi}
|\partial_\theta \widetilde\chi(\rho,\varphi,\theta')|\d\theta' .
$$
From the chain rule, we have
$$
\partial_\theta\widetilde \chi(\rho,\varphi,\theta)
=- {\partial_{x_1}\widetilde\chi}(\rho,\varphi,\theta)\,  
\rho \sin(\varphi)\sin(\theta)
+{\partial_{x_2}\widetilde\chi}(\rho,\varphi,\theta)\,  
\rho \sin(\varphi)\cos(\theta) .
$$
As a result, by H\"older's inequality, we obtain
$$
|\widetilde\chi(\rho,\varphi,\theta)|^p
\le
C\int_{0}^{2\pi}
|\partial_\theta \widetilde\chi(\rho,\varphi,\theta')|^p\d\theta'
\le
C\int_{0}^{2\pi}\rho^p
| {\nabla \widetilde\chi}(\rho,\varphi,\theta')|^p\d\theta' .
$$
Therefore, 
\begin{align*}
\int_{S_{d_*}(x^*)} |\chi(x)|^p\d x
&= \int_0^{d_*}\int_0^{ \pi}\int_0^{2\pi} |\widetilde\chi(\rho,\varphi,\theta)|^p \rho^2\sin(\varphi)\d\theta\d\varphi\d\rho \\
&\le
C\int_0^{d_*}\int_0^{ \pi}\int_0^{2\pi}\bigg(\int_{0}^{2\pi}
\rho^p|{\nabla\widetilde\chi}(\rho,\varphi,\theta')|^p\d\theta' \bigg) \rho^2\sin(\varphi)\d\theta \d\varphi\d\rho \\
&\le
Cd_*^p\int_0^{d_*}\int_0^{ \pi}\int_0^{2\pi}|{\nabla\widetilde\chi}(\rho,\varphi,\theta')|^p \rho^2 \sin(\varphi) \d\theta' \d\varphi\d\rho \\
&=
Cd_*^p \int_{S_{d_*}(x^*)} |\nabla\chi(x)|^p\d x.
\end{align*}
This proves the desired result. 
\end{proof}

The next result addresses the problem \eqref{weak-PDE} when the source function $f$ is supported in some part of $\Omega$. It establishes the stability of the solution in $W^{1,p}$ norm and traces the dependence of the stability constant on the diameter of the support. The corresponding result in \cite{Schatz-1980} is the equation (1.6) therein, which was established for $p=2$ in two dimensions. In our situation we need it for larger range of $p$.

\begin{lemma}\label{W1p-d}
For any bounded Lipschitz domain $\Omega$, there exist  positive constants $\alpha\in(0,\mbox{$\frac12$})$ and $C$ (depending on $\Omega$) such that for $\frac32-\alpha\le p\le 3+\alpha$ and $f\in L^p(\Omega)$ with ${\rm supp}(f)\subset S_{d_*}(x_0)$ and ${\rm dist}(x_0,\partial\Omega)\le d_*$, the solution of \eqref{weak-PDE} 
satisfies
$$
\|v\|_{W^{1,p}(\Omega)}
\le
Cd_*\|f\|_{L^p(\Omega)} .
$$
\end{lemma}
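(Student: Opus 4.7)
The plan is to combine the $W^{1,p}$ solvability of the Dirichlet problem on Lipschitz domains with the localized Poincar\'e inequality from Lemma~\ref{W1p0-d}, tying the two together by duality.

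First I would invoke the Jerison--Kenig $W^{1,p}$ regularity theorem for the Poisson equation on bounded Lipschitz domains in $\R^3$: there exists $\alpha\in(0,\tfrac12)$, depending only on $\Omega$, such that for every $p\in[\tfrac32-\alpha,\,3+\alpha]$ the solution operator $f\mapsto v$ associated with \eqref{weak-PDE} is bounded from $W^{-1,p}(\Omega)$ into $W^{1,p}_0(\Omega)$. Applying this to our $f\in L^p(\Omega)\subset W^{-1,p}(\Omega)$, the claim is reduced to proving $\|f\|_{W^{-1,p}(\Omega)}\le C d_*\|f\|_{L^p(\Omega)}$ under the support hypothesis.

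For this reduced estimate, by duality,
$\|f\|_{W^{-1,p}(\Omega)}=\sup\{|(f,\chi)| : \chi\in W^{1,p'}_0(\Omega),\ \|\chi\|_{W^{1,p'}(\Omega)}\le 1\}$, where $p'=p/(p-1)\in(1,\infty)$ is the conjugate exponent. Since ${\rm supp}(f)\subset S_{d_*}(x_0)$, H\"older's inequality gives $|(f,\chi)|\le \|f\|_{L^p(\Omega)}\,\|\chi\|_{L^{p'}(S_{d_*}(x_0))}$. To exploit the boundary proximity, I pick $x^*\in\partial\Omega$ realizing $|x^*-x_0|={\rm dist}(x_0,\partial\Omega)\le d_*$, so that $S_{d_*}(x_0)\subset S_{2d_*}(x^*)$. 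Then Lemma~\ref{W1p0-d}, applied to $\chi$ with exponent $p'$ and radius $2d_*$, yields $\|\chi\|_{L^{p'}(S_{2d_*}(x^*))}\le Cd_*\|\nabla\chi\|_{L^{p'}(\Omega)}$. Combined with the global Poincar\'e inequality $\|\chi\|_{W^{1,p'}(\Omega)}\le C\|\nabla\chi\|_{L^{p'}(\Omega)}$ on $W^{1,p'}_0(\Omega)$, one obtains $|(f,\chi)|\le Cd_*\|f\|_{L^p(\Omega)}$ uniformly for admissible $\chi$, which closes the argument.

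The substantive input is the Jerison--Kenig theorem, which dictates both the constant $\alpha$ and the symmetric range of $p$ around $p=2$; everything else is a clean duality-plus-Poincar\'e computation. The main obstacle is really bookkeeping: one must verify that for $\alpha<\tfrac12$ both $p$ and the dual exponent $p'$ fall in $(1,\infty)$ (so Lemma~\ref{W1p0-d} applies to $\chi$) and that the $W^{1,p}$-solvability range on Lipschitz domains accommodates the stated interval $[\tfrac32-\alpha,3+\alpha]$; both are guaranteed by choosing $\alpha$ small enough. Note that for convex polyhedra one can extend the range of $p$ considerably, but the lemma is stated (and will be used elsewhere) in the full generality of Lipschitz domains.
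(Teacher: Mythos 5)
Your proof is correct and follows essentially the same route as the paper's: both arguments hinge on the Jerison--Kenig $W^{1,p}$ solvability theorem together with Lemma~\ref{W1p0-d} applied to test functions $\chi\in W^{1,p'}_0(\Omega)$, differing only in bookkeeping (you factor the estimate through $\|f\|_{W^{-1,p}(\Omega)}\le Cd_*\|f\|_{L^p(\Omega)}$ and then apply the solution operator's $W^{-1,p}\to W^{1,p}_0$ boundedness, while the paper reaches $\|\nabla v\|_{L^p(\Omega)}$ directly by pairing against $\vec w\in C_0^\infty(\Omega)^3$ and solving the auxiliary problem $\Delta\chi=\nabla\cdot\vec w$).
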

\begin{proof}
If ${\rm dist}(x_0,\partial\Omega)\le d_*$, then $S_{d_*}(x_0)\subset S_{2d_*}(\bar x_0)$ for some $\bar x_0\in\partial\Omega$. 
For any $\chi\in W^{1,p'}_0(\Omega)$, there holds
\begin{align*}
|(\nabla v,\nabla\chi)|
=|(f,\chi)|
&\le
\|f\|_{L^p(S_{d_*}(x_0))}
\|\chi\|_{L^{p'}(S_{d_*}(x_0))} \\
&
\le
\|f\|_{L^p(S_{d_*}(x_0))}
\|\chi\|_{L^{p'}(S_{2d_*}(\bar x_0))} \\
&\le
Cd_*\|f\|_{L^p(\Omega)}
\|\nabla\chi\|_{L^{p'}(\Omega)} ,
\end{align*}
where in the last step we have used Lemma \ref{W1p0-d}. 

For $\vec{w}\in C_0^\infty(\Omega)^3$, we let $\chi\in H^{2}(\Omega)\cap H^1_0(\Omega)\hookrightarrow W^{1,p'}_0(\Omega)$ (for $\frac32-\alpha \le p\le 3+\alpha$) be the solution of 
$$
\left\{
\begin{aligned}
\Delta \chi&=\nabla\cdot \vec{w} &&\mbox{in}\,\,\,\Omega \\
\chi&=0 &&\mbox{on}\,\,\,\partial\Omega.
\end{aligned}
\right. 
$$
The solution $\chi$ defined above satisfies 
$$
\nabla\cdot(\vec{w}-\nabla\chi) = 0 ,
$$
and, according to \cite[Theorem B]{Jerison-Kenig-1995}, there exists a  constant $\alpha\in(0,\frac12)$ such that 
$$
\|\nabla\chi\|_{L^{p'}(\Omega)}
\le
C\|\vec{w}\|_{L^{p'}(\Omega)} 
\quad\mbox{for}\,\,\, \mbox{$\frac32$}-\alpha\le p\le 3+\alpha. 
$$
By using these properties, we have 
\begin{align*}
|(\nabla v,\vec{w})|
=
|(\nabla v,\nabla\chi)|
&\le
Cd_*\|f\|_{L^p(\Omega)}
\|\nabla\chi\|_{L^{p'}(\Omega)}
\le
Cd_*\|f\|_{L^p(\Omega)}
\|\vec{w}\|_{L^{p'}(\Omega)} . 
\end{align*}
Since $C_0^\infty(\Omega)^3$ is dense in $L^{p'}(\Omega)^3$ and the estimate above holds for all $\vec{w}\in C_0^\infty(\Omega)^3$, the duality pairing between $L^{p}(\Omega)^3$ and $L^{p'}(\Omega)^3$ implies the desired result. 
\end{proof}

The next lemma concerns basic properties of harmonic functions on  convex domains. The result is essentially the same as  in \cite[Lemma 8.3]{Schatz-Wahlbin-1978}. 
\begin{lemma}\label{lemma: Caccipoli}
Let $D$ and $D_d$ be two subdomains satisfying $D \subset D_d\subset \Omega$, with 
$$
D_d=\{x\in \Omega:
dist(x, D)\le d\} ,
$$ 
where $d$ is a positive constant. 
If $v\in H^1_0(\Omega)$ and $v$ is harmonic on $D_d$, i.e.
$$
(\nabla v, \nabla w)=0, \quad \forall w\in H^1_0(D_d) , 
$$
then the following estimates hold: 
\begin{subequations}          \label{eq: Caccipoli}
\begin{align}
|v|_{H^2(D)}&\le Cd^{-1}\|v\|_{H^1(D_d)} \label{eq: Caccipoli 1},\\
\| v\|_{H^1(D)}&\le Cd^{-1}\|v\|_{L_2(D_d)}\label{eq: Caccipoli 2}.
\end{align}
\end{subequations}
\end{lemma}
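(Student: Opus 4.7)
Both bounds are Caccioppoli-type inequalities for the harmonic function $v$ on the nested pair $D\subset D_d$, with the twist that $D_d$ may meet $\partial\Omega$. Throughout, the boundary condition $v|_{\partial\Omega}=0$ provided by $v\in H^1_0(\Omega)$ is what makes cutoff test functions admissible near $\partial\Omega$, while the convexity of $\Omega$ enters only in the proof of \eqref{eq: Caccipoli 1}, through the global $H^2$ Dirichlet regularity stated in Lemma \ref{lemma: W2p regularity}.

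First I would prove \eqref{eq: Caccipoli 2} by the classical energy argument. Fix a cutoff $\eta\in C^1(\R^3)$ with $\eta\equiv 1$ on $D$, $\eta\equiv 0$ on $\{x:\mathrm{dist}(x,D)\ge d\}$, and $|\nabla\eta|\le C/d$. Then $\eta^2 v\in H^1_0(D_d)$: on $\partial\Omega\cap\overline{D_d}$ it vanishes because $v$ does, and on the interior portion of $\partial D_d$ it vanishes because $\eta$ does. Plugging $\eta^2 v$ into the harmonicity identity yields
\begin{equation*}
0 \;=\; \int_{D_d}\eta^2|\nabla v|^2 \,+\, 2\int_{D_d}\eta v\,\nabla\eta\cdot\nabla v,
\end{equation*}
from which Cauchy--Schwarz and absorption produce $\|\nabla v\|_{L^2(D)}^2\le Cd^{-2}\|v\|_{L^2(D_d)}^2$. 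Combined with the trivial inclusion $\|v\|_{L^2(D)}\le\|v\|_{L^2(D_d)}$ and the fact that $d$ is bounded above by $\mathrm{diam}(\Omega)$, this gives \eqref{eq: Caccipoli 2}.

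For \eqref{eq: Caccipoli 1}, I would use a smoother cutoff $\eta\in C^2$ with $\eta\equiv 1$ on $D$, supported in the $d/2$-neighborhood of $D$, satisfying $|\nabla^k\eta|\le Cd^{-k}$ for $k\le 2$, and set $w=\eta v$. Since $v\in H^1_0(\Omega)$, clearly $w\in H^1_0(\Omega)$. Using the weak harmonicity of $v$ on $D_d\supset\mathrm{supp}(\eta)$, one checks that $-\Delta w=-2\nabla\eta\cdot\nabla v-v\Delta\eta$ belongs to $L^2(\Omega)$. Lemma \ref{lemma: W2p regularity} with $p=2$ then gives $\|w\|_{H^2(\Omega)}\le C\|\Delta w\|_{L^2(\Omega)}$, so that after substituting the bounds on $\nabla\eta$ and $\Delta\eta$ we obtain
\begin{equation*}
|v|_{H^2(D)}\;=\;|w|_{H^2(D)}\;\le\; Cd^{-1}\|\nabla v\|_{L^2(D_{d/2})}\,+\,Cd^{-2}\|v\|_{L^2(D_{d/2})}.
\end{equation*}
The lower-order term is then absorbed into $Cd^{-1}\|v\|_{H^1(D_d)}$, yielding \eqref{eq: Caccipoli 1}.

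The main obstacle is the case when $D$, and hence $\mathrm{supp}(\eta)$, touches $\partial\Omega$: interior elliptic regularity (for example via Nirenberg's difference-quotient method) would yield the $H^2$ bound on the interior part of $D$, but the boundary portion requires genuine global regularity of $w$. Lemma \ref{lemma: W2p regularity} furnishes exactly that on convex polyhedra, and indeed without convexity the conclusion would fail, since reentrant edges or corners introduce singular components that destroy the $H^2$ regularity of $w$.
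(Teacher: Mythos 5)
Your proof of \eqref{eq: Caccipoli 2} is the standard Caccioppoli energy argument, and it is correct; the observation that $\eta^2 v\in H^1_0(D_d)$ because $v$ vanishes on $\partial\Omega\cap\overline{D_d}$ and $\eta$ vanishes on the interior part of $\partial D_d$ is exactly the right point, and the trivial $\|v\|_{L^2(D)}\le\|v\|_{L^2(D_d)}\le Cd^{-1}\|v\|_{L^2(D_d)}$ (using $d\le\operatorname{diam}\Omega$) completes the full $H^1$ norm. (The paper itself offers no proof; it simply cites Schatz--Wahlbin.)

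For \eqref{eq: Caccipoli 1}, your strategy of setting $w=\eta v$ and invoking the global $H^2$ Dirichlet regularity on the convex polyhedron is attractive precisely because it treats edges and vertices uniformly and avoids the reflection/tangential-translation bookkeeping of the cited reference. However, the final step is not valid. What the cutoff computation actually yields is
\begin{equation*}
|v|_{H^2(D)} \;\le\; Cd^{-1}\|\nabla v\|_{L^2(D_{d/2})} \,+\, Cd^{-2}\|v\|_{L^2(D_{d/2})},
\end{equation*}
and the second term cannot be ``absorbed'' into $Cd^{-1}\|v\|_{H^1(D_d)}$: that would require $d^{-1}\|v\|_{L^2}\le C\|v\|_{H^1}$, which fails as $d\to 0$ (the trivial inequality $\|v\|_{L^2}\le\|v\|_{H^1}$ goes the wrong way by a factor $d^{-1}$). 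So as written your argument proves an estimate that is genuinely one power of $d$ weaker in the lower-order term, not \eqref{eq: Caccipoli 1}.

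Two ways to close the gap. (i) Restore the correct scaling of the lower-order term by controlling $\|v\|_{L^2(D_{d/2})}$ with $d\|\nabla v\|_{L^2(D_d)}$: if $D_d$ lies at distance $\gtrsim d$ from $\partial\Omega$, subtract a mean and use the interior Poincar\'e inequality (or, more directly, apply the interior difference-quotient Caccioppoli for second derivatives, for which $\partial_k v$ is again harmonic); if $D_d$ touches $\partial\Omega$, use the vanishing trace of $v$ together with a covering by balls and Lemma~\ref{W1p0-d} with $p=2$. (ii) Settle for the iterated estimate $\|v\|_{H^2(D)}\le Cd^{-2}\|v\|_{L^2(D_d)}$, which follows immediately from your intermediate bound by applying \eqref{eq: Caccipoli 2} on the pair $D_{d/2}\subset D_d$; this is in fact the only form of the lemma used in the paper (see the chain of inequalities following \eqref{interior-harmonic}), so for the purposes of the paper (ii) is enough, but it does not establish the sharper \eqref{eq: Caccipoli 1} as stated.
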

Finally, we need the best approximation property of the Ritz projection in $W^{1,p}$ norm.
In \cite{Guzman_Leykekhman_Rossman_Schatz_2009}, the best approximation property of the Ritz projection in $W^{1,\infty}$ norm was established on convex polyhedral domains. Together with the standard best approximation property  in $H^{1}$ norm
we obtain
\begin{equation}\label{eq: best in W1p}
\|v-R_h v\|_{W^{1,p}(\Omega)}
\le  C\min_{\chi\in \mathring S_h} \|v-\chi\|_{W^{1,p}(\Omega)}  
{\quad\forall\, v\in H^1_0(\Omega)\cap W^{1,p}(\Omega),}
\end{equation}
for any $2\le p\le \infty$. Extension of the above result to $1< p\le \infty$ follows by duality (cf. \cite[\textsection 8.5]{Brenner_Scott}).
These can be summarized as below. 
\begin{lemma}\label{Lemma:Ritz-error-W1p}
On a convex polyhedron $\Omega$, the following estimate holds for any fixed $p\in(1,\infty]$: 
\begin{align*} 
\|v-R_hv\|_{W^{1,p}(\Omega)}
\le 
Ch \|v\|_{W^{2,p}(\Omega)} \quad\forall\,
v\in H^1_0(\Omega)\cap W^{2,p}(\Omega) . 
\end{align*}
\end{lemma}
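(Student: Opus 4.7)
The plan is to invoke the best-approximation property \eqref{eq: best in W1p} quoted immediately before the lemma statement, which reduces the claim to exhibiting a single good approximation to $v$ from $\mathring S_h$. Concretely, I would pick an interpolant $I_h v \in \mathring S_h$ of $v$ that preserves homogeneous boundary values: for $p>3/2$ the nodal Lagrange interpolant of degree $r$ is well defined because Sobolev embedding yields $W^{2,p}(\Omega)\hookrightarrow C(\overline\Omega)$ in three dimensions, while for the remaining range $1<p\le 3/2$ a Scott--Zhang (or Cl\'ement) quasi-interpolant constructed on boundary faces serves the same role and maps $H^1_0(\Omega)$ into $\mathring S_h$. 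In both cases the chosen $\chi=I_h v$ lies in $\mathring S_h$, so it is admissible in \eqref{eq: best in W1p}.

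Next I would derive the standard interpolation estimate element by element. On each tetrahedron $\tau\in\mathfrak{T}$ of the quasi-uniform partition, the Bramble--Hilbert lemma combined with a scaling argument gives
$$
\|v - I_h v\|_{W^{1,p}(\tau)} \le C h \|v\|_{W^{2,p}(\omega_\tau)},
$$
where $\omega_\tau$ is the element itself in the Lagrange case and the usual patch around $\tau$ in the Scott--Zhang case. Summing these local bounds in $\ell^p$ over all $\tau\in\mathfrak{T}$ and using the fact that the patches $\omega_\tau$ have uniformly bounded overlap (by quasi-uniformity) produces the global estimate
$$
\|v - I_h v\|_{W^{1,p}(\Omega)} \le C h \|v\|_{W^{2,p}(\Omega)},
$$
valid uniformly in $h$ and for all $p\in(1,\infty]$.

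Combining this interpolation bound with \eqref{eq: best in W1p} yields
$$
\|v - R_h v\|_{W^{1,p}(\Omega)} \le C \|v - I_h v\|_{W^{1,p}(\Omega)} \le C h \|v\|_{W^{2,p}(\Omega)},
$$
which is the desired conclusion. The real substance of the lemma is thus packaged inside \eqref{eq: best in W1p}, whose proof relies on the $W^{1,\infty}$-stability of $R_h$ on convex polyhedra from \cite{Guzman_Leykekhman_Rossman_Schatz_2009} together with a duality argument to reach $1<p<2$; once that input is granted, the remaining steps above are entirely routine and I do not anticipate a genuine obstacle, provided one is careful to select an interpolant appropriate to each $p$ at the endpoints $p=\infty$ and $p$ near $1$.
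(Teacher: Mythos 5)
Your proposal is correct and takes essentially the same route as the paper: the lemma is a direct consequence of the $W^{1,p}$ quasi-optimality \eqref{eq: best in W1p} combined with standard interpolation estimates for a boundary-preserving interpolant, which is precisely how the paper (implicitly) derives it. Your care in switching from the nodal Lagrange interpolant to Scott--Zhang for $1<p\le 3/2$, where $W^{2,p}(\Omega)\not\hookrightarrow C(\overline\Omega)$ in three dimensions, is an appropriate detail that the paper leaves unspoken.
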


In sections \ref{sec: start of the proof}--\ref{sec: main proof}, we would use several results from \cite{1974-Nitsche-Schatz,Schatz-1980,Schatz-Wahlbin-1977}. Some of these results were stated therein for sufficiently small mesh size $h$ under certain hypothesis on the triangulation. Since, we concentrate on the Lagrange elements, all the hypotheses in \cite{Schatz-Wahlbin-1977} are trivially satisfied and we assume that our mesh size $h$ is sufficiently small, say $h\le h_0$ for some constant $h_0$, so these results hold. 

\section{Basic estimates} \label{sec: start of the proof}
\setcounter{equation}{0}

In this section, we derive some estimates we require to establish one of our key results, Theorem \ref{THM1}. This part of the argument up to \eqref{to-prove} is analogous to the first part of the proof of \cite[Theorem 1]{Schatz-1980} up to equation (3.10). The dyadic decomposition part is also similar. The essential difference lies in the duality argument in section \ref{sec: main proof}, after the equation \eqref{remains-4}. 

In \cite[Corollary 5.1]{Schatz-Wahlbin-1977}, the following interior error estimate was established 
$$
\|u-u_h\|_{L^\infty(\Omega_1)}\le Ch^l|\ln{h}|^{\bar{r}}|u|_{W^{l,\infty}(\Omega_2)}+Cd^{-3/q-p}\|u-u_h\|_{W^{-p,q}(\Omega_2)},
$$
for $0\le l\le r$, where $\bar{r}=1$ for $r=1$, $\bar{r}=0$ for $r\ge 2$ and  $\Omega_1\subset\subset\Omega_2\subset\subset\Omega$, with $\operatorname{dist}(\Omega_1,\partial \Omega_2)\ge d\ge kh$ and $\operatorname{dist}(\Omega_2,\partial \Omega)\ge d\ge kh$.
Choosing $u=0$, $p=0$ and $q=2$ in the above estimate,  we obtain that there exists a constant $C$ independent of $h$ such that  
\begin{align}\label{interior-Linfty}
\|u_{h}\|_{L^\infty(\Omega_1)}
\le
C d^{-\frac32}
\|u_{h}\|_{L^{2}(\Omega_2)} . 
\end{align}
Let $x_0\in\overline\Omega$ be a point satisfying 
$$
|u_h(x_0)|=\|u_h\|_{L^\infty(\Omega)}
\quad\mbox{with}\quad d={\rm dist}(x_0,\partial\Omega) .
$$
If $d\ge 2kh$ then we can choose $\Omega_1=S_{d/2}(x_0)$ and $\Omega_2=S_{d}(x_0)$. In this case, the following interior $L^\infty$ estimate holds (cf. \cite[Corollary 5.1]{Schatz-Wahlbin-1977} and \cite[Lemma 2.1 (ii)]{Schatz-1980}):  
$$
|u_{h}(x_{0})| 
\le
C d^{-\frac32}
\|u_{h}\|_{L^{2}(S_{d}(x_{0}))} . 
$$
Otherwise, we have $d\leqslant 2kh$. In this case, the inverse inequality of finite element functions (cf. \cite[Ch.~4.5]{Brenner_Scott}) implies 
$$
|u_{h}(x_{0})| 
=
\|u_{h}\|_{L^\infty(S_{h}(x_{0}))}
\le 
C h^{-\frac32}
\|u_{h}\|_{L^{2}(S_{h}(x_{0}))}
.
$$
Hence, either for $d\ge 2kh$ or $d\le 2kh$, the following estimate holds: 
\begin{align}\label{Linfty-1}
|u_{h}(x_{0})| 
\le
C \rho^{-\frac32}
\|u_{h}\|_{L^{2}(S_{\rho}(x_{0}))},\quad\mbox{with}\quad \rho=d+2kh.
\end{align}


To estimate the term $\|u_{h}\|_{L^{2}(S_{\rho}(x_{0}))}$ on the right hand side of the inequality above, we use the following duality property: 
$$
\|u_{h}\|_{L^{2}(S_{\rho}(x_{0}))}
= \sup_{
\begin{subarray}{c}
{\rm supp}(\varphi)\subset S_{\rho}(x_{0}) \\
\|\varphi\|_{L^{2}(S_{\rho}(x_{0}))}\le 1
\end{subarray}
}|(u_{h},\varphi)| , 
$$
which implies the existence of a function $\varphi\in C_0^\infty(\Omega)$ with the following properties: 
\begin{align}\label{L2-varphi}
{\rm supp}(\varphi)\subset S_{\rho}(x_{0}),
\quad
\|\varphi\|_{L^{2}(S_{\rho}(x_{0}))}\le 1
\end{align}
and
\begin{align}\label{L2-Srho-uh}
\|u_{h}\|_{L^{2}(S_{\rho}(x_{0}))}
\le 2|(u_{h},\varphi)| . 
\end{align}
For this function $\varphi$, we define $v\in H^1_0(\Omega)$ to be the solution of 
\begin{align}\label{weak-PDE-v}
(\nabla v,\nabla\chi)=(\varphi,\chi)
\quad\forall\,\chi\in H^1_0(\Omega),
\end{align}
and let $v_h\in \mathring S_h$ be the finite element solution of 
$$
(\nabla v_h,\nabla\chi_h)=(\varphi,\chi_h)
\quad\forall\,\chi_h\in \mathring S_h. 
$$
Thus, $v_h$ is the Ritz projection of $v$ and satisfies 
\begin{align}\label{weak-form-v-vh}
(\nabla (v-v_h),\nabla\chi_h)= 0 
\quad\forall\,\chi_h\in \mathring S_h.
\end{align}
Let $u$ be the solution of the problem (in weak form) 
\begin{align}\label{weak-form-u}
\left\{\begin{aligned}
&(\nabla u,\nabla \chi)=0 &&\forall\, \chi\in H^1_0(\Omega),\\
&u=u_h &&\mbox{on}\,\,\,\partial\Omega.
\end{aligned}\right.
\end{align}
Then the continuous maximum principle of \eqref{weak-form-u} implies 
\begin{align}\label{max-principle-PDE}
\|u\|_{L^{\infty}(\Omega)} \leqslant \|u_{h}\|_{L^{\infty}(\partial \Omega)}.
\end{align}
Notice, that $u_h$ is the Ritz projection of $u$, i.e. 
$$
\left\{\begin{aligned}
&(\nabla (u-u_h),\nabla \chi_h)=0 &&\forall\, \chi_h\in \mathring S_h,\\
&u-u_h=0 &&\mbox{on}\,\,\,\partial\Omega.
\end{aligned}\right.
$$
Therefore, we have 
\begin{align}\label{L2-uh-1}
\|u_{h}\|_{L^{2}(S_{\rho}(x_{0}))} 
&\le
2|(u_{h},\varphi)| 
&&\mbox{(here we used \eqref{L2-Srho-uh})} \nonumber \\
&=2|(u_{h}-u,\varphi) + (u,\varphi)| \nonumber \\
&= 2|(\nabla(u_h-u),\nabla v) + (u,\varphi)| 
&&\mbox{(here we used \eqref{weak-PDE-v})} \nonumber\\
&=2| (\nabla u_h ,\nabla v) + (u,\varphi)| 
&&\mbox{(here we used \eqref{weak-form-u})} \nonumber\\ 
&\le 2|(\nabla u_h ,\nabla v)| + 2\|u\|_{L^\infty(\Omega)}\|\varphi\|_{L^1(\Omega)} \nonumber\\
&\le 2|(\nabla u_h ,\nabla v)| + C\rho^{\frac32} \|u_h\|_{L^\infty(\partial\Omega)}\|\varphi\|_{L^2(S_{\rho}(x_{0}))} , \end{align}
where we have used \eqref{max-principle-PDE} and the H\"older inequality in deriving the last inequality. 

To estimate $|(\nabla u_h ,\nabla v)|$, we note that 
\begin{align*}
(\nabla u_h ,\nabla v)
&=(\nabla u_h ,\nabla (v-v_h)) &&\mbox{(here we use \eqref{discrete-harmonic} and $v_h\in\mathring S_h$)} \\ 
&=(\nabla (u_h-\chi_h) ,\nabla (v-v_h)) 
&&\forall\,\chi_h\in \mathring S_h. 
\quad\mbox{(here we use \eqref{weak-form-v-vh})}.
\end{align*}
We simply choose $\chi_h$ to be equal to $u_h$ at interior nodes and $\chi_h=0$ on $\partial\Omega$; thus $u_h(x)-\chi_h(x)$ is zero when ${\rm dist}(x,\partial\Omega)\ge h$, and for any $r\geq 1$ 
$$
\|u_h-\chi_h\|_{L^\infty(\Omega)} 
\le
C \|u_h\|_{L^\infty(\partial\Omega)} . 
$$
If we define 
$$
\Lambda_h=\{x\in\Omega:{\rm dist}(x,\partial\Omega)\le h\},
$$
then using the inverse inequality,
\begin{align}\label{grad-uh-vh}
|(\nabla u_h ,\nabla v)| 
&\le \|\nabla (u_h-\chi_h)\|_{L^\infty(\Lambda_h)} \|\nabla (v-v_h)\|_{L^1(\Lambda_h)} \nonumber \\
&\le
Ch^{-1}\|u_h-\chi_h\|_{L^\infty(\Omega)} \|\nabla (v-v_h)\|_{L^1(\Lambda_h)} \nonumber \\
&\le
Ch^{-1}\|u_h\|_{L^\infty(\partial\Omega)} \|\nabla (v-v_h)\|_{L^1(\Lambda_h)} .
\end{align}
Then, substituting \eqref{L2-uh-1} and \eqref{grad-uh-vh} into \eqref{Linfty-1}, we obtain 
\begin{align}\label{Linfty-uh-2}
\|u_h\|_{L^\infty(\Omega)} 
\le
C\big(\rho^{-\frac32} h^{-1} \|\nabla (v-v_h)\|_{L^1(\Lambda_h)}
+1)
\|u_h\|_{L^\infty(\partial\Omega)} . 
\end{align}
The proof of Theorem \ref{THM1} will be completed if we establish
\begin{align}\label{to-prove}
\rho^{-\frac32} h^{-1} \|\nabla (v-v_h)\|_{L^1(\Lambda_h)}\le C, 
\end{align}
which will be accomplished in the next section. 

\section{Estimate of 
$\rho^{-\frac32} h^{-1} \|\nabla (v-v_h)\|_{L^1(\Lambda_h)}$}
\label{sec: main proof}

Let $R_0={\rm diam}(\Omega)$ and $d_j=R_02^{-j}$ for $j=0,1,2,\dots$
We define a sequence of subdomains 
$$
A_j=\{x\in\Omega:d_{j+1} \le|x-x_0|\le d_j \} , 
\quad j=0,1,2,\dots
$$
For each $j$ we denote $A_j^l$ to be a subdomain slightly larger than $A_j$, defined by 
$$
A_j^l=
A_{j-l}
\cup
\cdots
\cup
A_{j}
\cup
A_{j+1}
\cup
\cdots
\cup
A_{j+l}
\quad l=1,2,\dots
$$
Let $J=[\ln_2(R_0/8\rho)]+1$, with $[\ln_2(R_0/8\rho)]$ denoting the greatest integer not exceeding $\ln_2(R_0/8\rho)$. 
Then 
$$
2\rho\le d_{J+1}\le 4\rho  
$$
and
\begin{align}\label{volume-Aj}
{\rm measure}(A_j\cap \Lambda_h) \leq C h d_j^2 . 
\end{align}
By using these subdomains defined above, we have 
\begin{align}\label{remains-1}
&\rho^{-\frac32} h^{-1} \|\nabla (v-v_h)\|_{L^1(\Lambda_h)} 
\nonumber \\ 
&\le
\rho^{-\frac32} h^{-1} 
\bigg(\sum_{j=0}^J
\|\nabla (v-v_h)\|_{L^1(\Lambda_h\cap A_j)} 
+
\|\nabla (v-v_h)\|_{L^1(\Lambda_h\cap S_{4\rho}(x_0))} 
\bigg)
\nonumber \\
&\le
C\rho^{-\frac32} h^{-1} \sum_{j=0}^J h^{\frac12}d_j
\|\nabla (v-v_h)\|_{L^2(\Lambda_h\cap A_j)} 
\nonumber \\
&\quad\,
+
C\rho^{-\frac12} h^{-\frac12} 
\|\nabla (v-v_h)\|_{L^2(\Lambda_h\cap S_{4\rho}(x_0))},
\end{align}
where the H\"older inequality and \eqref{volume-Aj} were used in deriving the last inequality. 

Using global error estimate in $H^1$ norm, Lemma \ref{lemma: W2p regularity}
with $p=2$ and \eqref{L2-varphi}, we obtain  
\begin{align*} 
\rho^{-\frac12} h^{-\frac12} 
\|\nabla (v-v_h)\|_{L^2(\Lambda_h\cap S_{4\rho}(x_0))} 
&\le
C\rho^{-\frac12} h^{-\frac12} h 
\|v\|_{H^2(\Omega)} \nonumber \\
&\le
C\rho^{-\frac12} h^{-\frac12} h 
\|\varphi\|_{L^2(\Omega)} \le C,
\end{align*}
where we have used $\rho\ge h$ and $\|\varphi\|_{L^2(\Omega)}\le 1$ in deriving the last inequality. 
Substituting the last inequality into \eqref{remains-1} yields 
\begin{align}\label{remains-2}
&\rho^{-\frac32} h^{-1} \|\nabla (v-v_h)\|_{L^1(\Lambda_h)} 
\le
C\rho^{-\frac32} h^{-\frac12} \sum_{j=0}^J d_j
\|\nabla (v-v_h)\|_{L^2(A_j)} 
+C . 
\end{align}
Now, we use the following interior energy error estimate (proved in \cite[Theorem 5.1]{1974-Nitsche-Schatz}, also see \cite[Lemma 2.1 (i)]{Schatz-1980}): 
\begin{align}\label{interior-H1}
\|\nabla (v-v_h)\|_{L^2(A_j)} \le
C\|\nabla (v-I_hv)\|_{L^2(A_j^1)} 
&+Cd_j^{-1}\|v-I_hv\|_{L^2(A_j^1)}\nonumber \\
&+Cd_j^{-1}\|v-v_h\|_{L^2(A_j^1)},
\end{align}
where $I_h$ denotes the nodal interpolant. Using the approximation theory, we obtain 
\begin{align}\label{interior-H2}
\|\nabla (v-v_h)\|_{L^2(A_j)}
&\le
\big(Ch +Ch^{2}d_j^{-1}\big) 
\|v\|_{H^{2}(A^2_j)}
+Cd_j^{-1}\|v-v_h\|_{L^2(A_j^1)} \nonumber \\
&\le
Ch d_j^{\frac12-\frac{3}{p}}\|v\|_{W^{1,p}(A_j^3)} 
+Cd_j^{-1}\|v-v_h\|_{L^2(A_j^1)} 
\quad\mbox{for}\,\,\, \mbox{$\frac65$}<p<2, 
\end{align}
where we have used $d_j\ge h$ and the following inequality in deriving the last inequality:  
\begin{align}\label{interior-harmonic}
\|v\|_{H^{2}(A^2_j)}\le
Cd_j^{\frac12-\frac{3}{p}}\|v\|_{W^{1,p}(A_j^3)} 
\quad \mbox{for}\,\,\, \mbox{$\frac65$}<p<2.
\end{align}
The inequality above follows from Lemma \ref{lemma: Caccipoli}, the H\"older inequality and Sobolev embedding, i.e. 
$$
\begin{aligned}
\|v\|_{H^2(A^2_j)} 
&\le Cd_j^{-2}\|v\|_{L^{2}(A^{3}_j)}\\
&\le Cd_j^{-2+\frac32-\frac{3}{q}}\|v\|_{L^{q}(A^{3}_j)} &&\mbox{if $q>2$}\\
&\le Cd_j^{\frac12-\frac{3}{p}}\|v\|_{W^{1,p}(A^{3}_j)}  
&&
\mbox{for $\frac3q=\frac3p-1$ and $\mbox{$\frac65$}<p<2$ (so that $q>2$)}
.
\end{aligned}
$$
This proves that \eqref{interior-H1} holds for $\mbox{$\frac65$}<p<2$. 

By applying Lemma \ref{W1p-d} to \eqref{interior-H1} with $p=\frac32$, we obtain 
\begin{align}\label{interior-2}
&\|\nabla (v-v_h)\|_{L^2(A_j)} \nonumber \\ 
&\le Ch 
d_j^{-\frac32}\rho 
\|\varphi\|_{L^{\frac32}(S_\rho(x_0))} 
+Cd_j^{-1}\|v-v_h\|_{L^2(A_j^1)} \nonumber \\
&\le Ch d_j^{-\frac32}\rho^{\frac32}
+Cd_j^{-1}\|v-v_h\|_{L^2(A_j^1)},
\end{align}
where the last inequality is due to the following H\"older inequality:
$$
\|\varphi\|_{L^{\frac32}(S_\rho(x_0))} 
\le 
C\rho^{\frac12}
\|\varphi\|_{L^{2}(S_\rho(x_0))} 
\quad
\mbox{with}\quad
\|\varphi\|_{L^{2}(S_\rho(x_0))} \le 1.
$$
From \eqref{interior-2} we see that 
\begin{align}\label{interior-3}
d_j\|\nabla (v-v_h)\|_{L^2(A_j)} \le
C\rho^{\frac32} h^{\frac12}  \bigg(\frac{h}{d_j}\bigg)^{\frac12} 
+C \|v-v_h\|_{L^2(A_j^1)}.
\end{align}
Then, substituting \eqref{interior-3} into \eqref{remains-2}, we have 
\begin{align}\label{remains-4}
&\rho^{-\frac32} h^{-1} \|\nabla (v-v_h)\|_{L^1(\Lambda_h)} 
\nonumber \\ 
&\le
C\sum_{j=0}^J \bigg(\frac{h}{d_j}\bigg)^{\frac12}   
+
C\rho^{-\frac32} h^{-\frac12}\sum_{j=0}^J \|v-v_h\|_{L^2(A_j^1)} \nonumber \\
&\le C+C\rho^{-\frac32} h^{-\frac12}\sum_{j=0}^J
\|v-v_h\|_{L^2(A_j^1)} .
\end{align}

It remains to estimate $\sum_{j=0}^J\|v-v_h\|_{L^2(A_j^1)}$. To this end, we let $\chi$ be a smooth cut-off function satisfying 
$$
\chi=1
\,\,\,\mbox{on}\,\,\,
A_j^1
\quad\mbox{and}\quad
\chi=0
\,\,\,\mbox{outside}\,\,\,
A_j^2.
$$
Then
\begin{align}\label{v-vh-L6}
\|v-v_h\|_{L^6(A_j^1)}
&\le
\|\chi(v-v_h)\|_{L^6(\Omega)} \nonumber \\
&\le
\|\chi(v-v_h)\|_{H^1(\Omega)}
\quad\mbox{(Sobolev embedding $H^1(\Omega)\hookrightarrow L^6(\Omega)$)} \nonumber \\
&\le
\|\nabla(v-v_h)\|_{L^2(A_j^2)}
+Cd_j^{-1}\|v-v_h\|_{L^2(A_j^2)} . 
\end{align}
By using \eqref{v-vh-L6} and the interpolation inequality (for $1<p<2$) 
\begin{align}\label{Lp-interpl}
\|v-v_h\|_{L^2(A_j^1)}
\le
\|v-v_h\|_{L^p(A_j^1)}^{1-\theta}
\|v-v_h\|_{L^6(A_j^1)}^\theta
\quad\mbox{with}\,\,\,
\frac{1}{2}
=\frac{1-\theta}{p}+\frac{\theta}{6} ,
\end{align}
we obtain
\begin{align*}
&\|v-v_h\|_{L^2(A_j^1)} \\
&\le
\|v-v_h\|_{L^p(A_j^1)}^{1-\theta}
\big(\|\nabla(v-v_h)\|_{L^2(A_j^2)}
+Cd_j^{-1}\|v-v_h\|_{L^2(A_j^2)}\big)^{\theta} \\
&=
(\varepsilon^{-\frac{\theta}{1-\theta}}\|v-v_h\|_{L^p(A_j^1)})^{1-\theta}
\big(\varepsilon\|\nabla(v-v_h)\|_{L^2(A_j^2)}
+C\varepsilon d_j^{-1}\|v-v_h\|_{L^2(A_j^2)}\big)^{\theta} \\
&\le
\varepsilon^{-\frac{\theta}{1-\theta}}\|v-v_h\|_{L^p(A_j^1)} 
+\varepsilon\|\nabla(v-v_h)\|_{L^2(A_j^2)}
+C\varepsilon d_j^{-1}\|v-v_h\|_{L^2(A_j^2)} ,
\end{align*}
where $\varepsilon$ can be an arbitrary positive number.
By choosing $\varepsilon=d_j(\rho/d_j)^{\sigma}$ with $\sigma\in(0,1)$, we obtain
\begin{align}\label{Lp-interpl2}
\|v-v_h\|_{L^2(A_j^1)} 
&\le
\bigg(\frac{\rho}{d_j}\bigg)^{-\frac{\theta\sigma}{1-\theta}}
d_j^{-\frac{\theta}{1-\theta}}\|v-v_h\|_{L^p(A_j^1)} \\
&\quad\,  
+\bigg(\frac{\rho}{d_j}\bigg)^{\sigma}
\big(d_j\|\nabla(v-v_h)\|_{L^2(A_j^2)}
+C\|v-v_h\|_{L^2(A_j^2)} \big) . 
\end{align}
Hence,
\begin{align}\label{Lp-interpl3}
&\rho^{-\frac32} h^{-\frac12}\sum_{j=0}^J\|v-v_h\|_{L^2(A_j^1)} \notag \\
&\le 
C\rho^{-\frac32} h^{-\frac12} 
\sum_{j=0}^J \bigg(\frac{\rho}{d_j}\bigg)^{-\frac{\theta\sigma}{1-\theta}}
d_j^{-\frac{\theta}{1-\theta}}\|v-v_h\|_{L^p(A_j^1)} \notag \\
&\quad\, 
+
C\rho^{-\frac32} h^{-\frac12}\sum_{j=0}^J 
\bigg(\frac{\rho}{d_j}\bigg)^{\sigma}
\big(d_j\|\nabla(v-v_h)\|_{L^2(A_j^2)}
+C\|v-v_h\|_{L^2(A_j^2)} \big) \notag \\
&\le 
C\rho^{-\frac32} h^{-\frac12}\sum_{j=0}^J \bigg(\frac{\rho}{d_j}\bigg)^{-\frac{\theta\sigma}{1-\theta}}
d_j^{-\frac{\theta}{1-\theta}}\|v-v_h\|_{L^p(A_j^1)} \notag \\
&\quad\, 
+
C\rho^{-\frac32} h^{-\frac12}\sum_{j=0}^J \bigg(\frac{\rho}{d_j}\bigg)^{\sigma}\|v-v_h\|_{L^2(A_j^2)},
\end{align}
where we have used \eqref{interior-3} in deriving the last inequality. 
Note that 
\begin{align*}
&\sum_{j=0}^J \bigg(\frac{\rho}{d_j}\bigg)^{\sigma}\|v-v_h\|_{L^2(A_j^2)} \\
&\le
C\bigg(\frac{\rho}{d_j}\bigg)^{\sigma}\|v-v_h\|_{L^2(S_{8\rho}(x_0))} 
+ 
2\sum_{j=0}^J \bigg(\frac{\rho}{d_j}\bigg)^{\sigma}\|v-v_h\|_{L^2(A_j^1)} . 
\end{align*}
Combining the last two estimates, we obtain
\begin{align*}
&\rho^{-\frac32} h^{-\frac12} \sum_{j=0}^J\|v-v_h\|_{L^2(A_j^1)} \\
&\le 
C\rho^{-\frac32} h^{-\frac12}\sum_{j=0}^J \bigg(\frac{\rho}{d_j}\bigg)^{-\frac{\theta\sigma}{1-\theta}}
d_j^{-\frac{\theta}{1-\theta}}\|v-v_h\|_{L^p(A_j^1)} \\
&\quad\, 
+ C\rho^{-\frac32} h^{-\frac12}\bigg(\frac{\rho}{d_j}\bigg)^{\sigma}\|v-v_h\|_{L^2(S_{8\rho}(x_0))}
+
C\rho^{-\frac32} h^{-\frac12}\sum_{j=0}^J \bigg(\frac{\rho}{d_j}\bigg)^{\sigma}\|v-v_h\|_{L^2(A_j^1)}.
\end{align*}
If $d_j\ge \kappa\rho$ for sufficiently large constant $\kappa$, then the last term can be absorbed by the left side. Hence, we have 
\begin{align}\label{L2-Lp}
\sum_{j=0}^J\rho^{-\frac32} h^{-\frac12}\|v-v_h\|_{L^2(A_j^1)} 
&\le 
\sum_{j=0}^J C\rho^{-\frac32} h^{-\frac12}
\bigg(\frac{\rho}{d_j}\bigg)^{-\frac{\theta\sigma}{1-\theta}}
d_j^{-\frac{\theta}{1-\theta}}\|v-v_h\|_{L^p(A_j^1)} \nonumber \\
&\quad\, 
+ C\rho^{-\frac32} h^{-\frac12}\bigg(\frac{\rho}{d_j}\bigg)^{\sigma}\|v-v_h\|_{L^2(S_{8\rho}(x_0))} . 
\end{align}

It remains to estimate $\|v-v_h\|_{L^p(A_j^1)}$ and $\|v-v_h\|_{L^2(S_{8\rho}(x_0))}$. To this end, we let $\psi\in C^\infty_0(A_j^1)$ be a function satisfying 
\begin{align}\label{Lp-interpl4}
\|v-v_h\|_{L^p(A_j^1)} 
&\le 2 (v-v_h,\psi) 
\quad\mbox{and}\quad 
\|\psi\|_{L^q(A_j^1)}\le 1 ,\quad\mbox{with}\quad
\frac1p+\frac1q=1.
\end{align}
Let $w\in H^1_0(\Omega)$ be the solution of 
\begin{align*} 
\left\{
\begin{aligned}
-\Delta w&=\psi &&\mbox{in}\,\,\,\Omega, \\
w&=0 &&\mbox{on}\,\,\,\partial\Omega, 
\end{aligned}
\right.
\end{align*}
Then using Lemma \ref{Lemma:Ritz-error-W1p} and Lemma \ref{lemma: W2p regularity}, we obtain
\begin{align*} 
(v-v_h,\psi) 
&=(\nabla(v-v_h),\nabla w) \\ 
&=(\nabla(v-v_h),\nabla (w-I_hw)) \\ 
&\le \|\nabla(v-v_h)\|_{L^p(\Omega)} 
\|\nabla(w-I_hw)\|_{L^q(\Omega)} \\ 
&\le Ch^{2} 
\|v\|_{W^{2,p}(\Omega)} 
\|w\|_{W^{2,q}(\Omega)} \\
&\le Ch^{2} 
\|\varphi\|_{L^p(\Omega)} 
\|\psi\|_{L^q(\Omega)} \\
&\le Ch^{2} 
\|\varphi\|_{L^p(S_\rho(x_0))}  \\
&\le Ch^{2}\rho^{\frac3p-\frac32} 
\|\varphi\|_{L^2(S_\rho(x_0))} 
\|\psi\|_{L^q(A_j^1)} \\
&\le Ch^{2}\rho^{\frac3p-\frac32} ,
\end{align*} 
where we have used $\|\varphi\|_{L^2(S_\rho(x_0))} \le 1$ and $\|\psi\|_{L^q(A_j^1)} \le 1$ in deriving the last inequalities. 
This implies
\begin{align}\label{Lp-interpl5}
\|v-v_h\|_{L^p(A_j^1)} 
\le
Ch^{2}\rho^{\frac3p-\frac32}
\quad\mbox{and}\quad
\|v-v_h\|_{L^2(S_{8\rho}(x_0))} 
\le
Ch^{2} .
\end{align}
By substituting these estimates into \eqref{L2-Lp}, we obtain
\begin{align}\label{L2-Lp-2}
\sum_{j=0}^J\rho^{-\frac32} h^{-\frac12}\|v-v_h\|_{L^2(A_j^1)} 
&\le 
\sum_{j=0}^J C \bigg(\frac{h}{\rho}\bigg)^{\frac32} 
\bigg(\frac{\rho}{d_j}\bigg)^{\frac3p-\frac32-\frac{\theta\sigma}{1-\theta}}
+C  .
\end{align}
Since $p<2$, by choosing sufficiently small $\sigma$ we have $\frac3p-\frac32-\frac{\theta\sigma}{1-\theta}>0$ and therefore 
\begin{align}\label{L2-Lp-2}
\sum_{j=0}^J\rho^{-\frac32} h^{-\frac12}\|v-v_h\|_{L^2(A_j^1)} 
&\le 
\sum_{j=0}^J C \bigg(\frac{h}{\rho}\bigg)^{\frac32} 
\bigg(\frac{\rho}{d_j}\bigg)^{\frac3p-\frac32-\frac{\theta\sigma}{1-\theta}} + C
\le C .
\end{align}
Then, substituting this into \eqref{remains-4}, we obtain 
\begin{align} 
&\rho^{-\frac32} h^{-1} \|\nabla (v-v_h)\|_{L^1(\Lambda_h)} 
\le C.
\end{align}
This proves the desired result 
for sufficiently small mesh size $h\le h_0$, as explained in the end of section \ref{sec: preliminary}. 

For $h\ge h_0$, we denote by $\widetilde{g}_h\in S_h$ the finite element function satisfying $\widetilde g_h=u_h$ on $\partial\Omega$ and  $\widetilde g_h=0$ at the interior nodes  of the domain $\Omega$. Naturally,  
$$
\|\widetilde g_h\|_{L^\infty(\Omega)}\le \|u_h\|_{L^\infty(\partial\Omega)} .
$$
Since $\chi_h=u_h-\widetilde g_h\in \mathring S_h$, from \eqref{discrete-harmonic}, we have
$$
(\nabla u_h,(\nabla( u_h-\widetilde g_h))=0
$$
and as a result
$$
\begin{aligned}
\|\nabla (u_h-\widetilde g_h)\|^2_{L^2(\Omega)}
=(\nabla(u_h-\widetilde g_h),\nabla(u_h-\widetilde g_h))&=-(\nabla \widetilde g_h,\nabla(u_h-\widetilde g_h))\\
&\le
C\|\nabla\widetilde g_h\|_{L^2(\Omega)}\|\nabla(u_h-\widetilde g_h)\|_{L^2(\Omega)}.
\end{aligned}
$$
Thus, using the inverse inequality and that $h\ge h_0$, we have
$$
\begin{aligned}
\|\nabla (u_h-\widetilde g_h)\|_{L^2(\Omega)}
\le
C\|\nabla \widetilde g_h\|_{L^2(\Omega)}
\le
Ch^{-1}\|\widetilde g_h\|_{L^2(\Omega)}
&\le
Ch_0^{-1}\|\widetilde g_h\|_{L^\infty(\Omega)} \\
&\le Ch_0^{-1}\|u_h\|_{L^\infty(\partial\Omega)}.
\end{aligned}
$$
By using the inverse inequality and the above estimate, we also have 
\begin{align*}
\|u_h-\widetilde g_h\|_{L^\infty(\Omega)}
&\le
Ch^{-\frac32}\|u_h-\widetilde g_h\|_{L^2(\Omega)} \\
&\le
Ch^{-\frac32}\|\nabla (u_h-\widetilde g_h)\|_{L^2(\Omega)} \\
&\le Ch_0^{-\frac52}\|u_h\|_{L^\infty(\partial\Omega)}
.
\end{align*}
By the triangle inequality, this proves
 $$
 \|u_h\|_{L^\infty(\Omega)} \le  \|\widetilde g_h\|+\|u_h-\widetilde g_h\|_{L^\infty(\Omega)}\le C \|u_h\|_{L^\infty(\partial\Omega)}
 $$
  for $h\ge h_0$. 

Combining the two cases $h\le h_0$ and $h\ge h_0$, we obtain the desired result of Theorem \ref{THM1}.

\section{Application to the Ritz projection}\label{sec: stability Ritz}

In this section, we adopt Schatz's argument to prove the maximum-norm stability of the Ritz projection. This argument uses the weak maximum principle established above to remove a logarithmic factor for  finite elements of degree $r\ge 2$ in convex polyhedral domains under the following assumption:
\begin{enumerate}
\item[(A)]
The tetrahedral partition of $\Omega$ can be extended to a larger convex domain $\widetilde{\Omega}$ quasi-uniformly, with $\Omega\subset\subset\widetilde{\Omega}$. 
\end{enumerate}
The logarithmic factor has been removed in previous articles only for $r\geq 2$ on smooth and two-dimensional polygonal domains. 

For any function $u\in H^1_0(\Omega)$, we denote by $R_hu\in \mathring S_h$ the Ritz projection of $u$, defined by  
\begin{equation}\label{Ritz-u-uh}
(\nabla (u-R_hu),\nabla \chi_h)=0\quad\forall\,\chi_h\in\mathring S_h. 
\end{equation}
\begin{theorem}\label{THM-Ritz}
Under assumption {\rm(A)}, for finite elements of degree $r\geq 2$ the Ritz projection satisfies  
\begin{equation}\label{Ritz-Linfty}
\|R_hu\|_{L^\infty(\Omega)} \le C\|u\|_{L^\infty(\Omega)}
\quad\forall\, u\in H^1_0(\Omega)\cap C(\overline\Omega). 
\end{equation}
\end{theorem}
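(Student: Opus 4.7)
The plan is to reduce the $L^\infty$-stability of $R_h$ to the weak maximum principle (Theorem~\ref{THM1}) via an auxiliary Ritz projection on the larger convex polyhedron $\widetilde{\Omega}$ supplied by assumption~(A). Since $u\in H^1_0(\Omega)\cap C(\overline{\Omega})$ vanishes on $\partial\Omega$, the extension $\widetilde{u}$ of $u$ by zero to $\widetilde{\Omega}$ lies in $H^1_0(\widetilde{\Omega})\cap C(\overline{\widetilde{\Omega}})$ with $\|\widetilde{u}\|_{L^\infty(\widetilde{\Omega})}=\|u\|_{L^\infty(\Omega)}$. Let $\widetilde{S}_h$ be the Lagrange finite element space of degree $r$ on the extended quasi-uniform partition and let $\widetilde{R}_h:H^1_0(\widetilde{\Omega})\to\mathring{\widetilde{S}}_h$ be the Ritz projection on $\widetilde{\Omega}$. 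Under assumption~(A) the partition of $\widetilde{\Omega}$ restricts to $\mathfrak{T}$, so $w_h:=\widetilde{R}_h\widetilde{u}\big|_{\Omega}$ lies in $S_h$.

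Set $v_h:=w_h-R_hu\in S_h$. For any $\chi_h\in\mathring{S}_h$, let $\widetilde{\chi}_h$ be its extension by zero; continuity is preserved because $\chi_h$ vanishes on $\partial\Omega$, so $\widetilde{\chi}_h\in\mathring{\widetilde{S}}_h$. Using that $\widetilde{u}\equiv 0$ outside $\Omega$,
\begin{equation*}
(\nabla w_h,\nabla\chi_h)_\Omega
=(\nabla\widetilde{R}_h\widetilde{u},\nabla\widetilde{\chi}_h)_{\widetilde{\Omega}}
=(\nabla\widetilde{u},\nabla\widetilde{\chi}_h)_{\widetilde{\Omega}}
=(\nabla u,\nabla\chi_h)_{\Omega}
=(\nabla R_hu,\nabla\chi_h)_{\Omega}.
\end{equation*}
Thus $(\nabla v_h,\nabla\chi_h)_\Omega=0$ for all $\chi_h\in\mathring{S}_h$, i.e.\ $v_h$ is discrete harmonic on $\Omega$ in the sense of \eqref{discrete-harmonic}. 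Since $R_hu=0$ on $\partial\Omega$, Theorem~\ref{THM1} gives $\|v_h\|_{L^\infty(\Omega)}\le C\|w_h\|_{L^\infty(\partial\Omega)}$, whence
\begin{equation*}
\|R_hu\|_{L^\infty(\Omega)}\le\|w_h\|_{L^\infty(\Omega)}+\|v_h\|_{L^\infty(\Omega)}\le C\|w_h\|_{L^\infty(\overline{\Omega})}=C\|\widetilde{R}_h\widetilde{u}\|_{L^\infty(\overline{\Omega})}.
\end{equation*}

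It therefore remains to prove the interior bound $\|\widetilde{R}_h\widetilde{u}\|_{L^\infty(\overline{\Omega})}\le C\|u\|_{L^\infty(\Omega)}$. Since $\overline{\Omega}\subset\subset\widetilde{\Omega}$ by assumption~(A), interpose $\overline{\Omega}\subset\subset\Omega'\subset\subset\widetilde{\Omega}$ and apply the Schatz--Wahlbin interior error estimate \cite[Corollary~5.1]{Schatz-Wahlbin-1977} (the same result underlying \eqref{interior-Linfty}) to $\widetilde{u}-\widetilde{R}_h\widetilde{u}$. Taking $l=0$ and the comparison polynomial equal to zero yields
\begin{equation*}
\|\widetilde{u}-\widetilde{R}_h\widetilde{u}\|_{L^\infty(\overline{\Omega})}\le C|\ln h|^{\bar r}\|\widetilde{u}\|_{L^\infty(\Omega')}+C\|\widetilde{u}-\widetilde{R}_h\widetilde{u}\|_{W^{-p,q}(\Omega')},
\end{equation*}
and the crucial point is that $r\ge 2$ forces $\bar r=0$, which is exactly the mechanism that removes the logarithm left behind in \cite{Leykekhman_Vexler_2016}. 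The negative-norm residual is controlled by a duality argument analogous to those in Section~\ref{sec: main proof}, via Lemma~\ref{Lemma:Ritz-error-W1p} and Lemma~\ref{lemma: W2p regularity} together with the compact containment $\Omega'\subset\subset\widetilde{\Omega}$, and ultimately by $C\|u\|_{L^\infty(\Omega)}$ since $\widetilde{u}$ is supported in $\overline{\Omega}$. The triangle inequality $\|\widetilde{R}_h\widetilde{u}\|_{L^\infty(\overline{\Omega})}\le\|\widetilde{u}\|_{L^\infty(\overline{\Omega})}+\|\widetilde{u}-\widetilde{R}_h\widetilde{u}\|_{L^\infty(\overline{\Omega})}$ then closes the argument.

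The main obstacle is the interior estimate itself: the appearance of $\bar r=1$ in the $r=1$ case of \cite[Corollary~5.1]{Schatz-Wahlbin-1977} is exactly what would prevent the same argument for linear elements, which explains the restriction to $r\ge 2$ in Theorem~\ref{THM-Ritz}. A secondary technical point is verifying that the negative-norm residual really contributes only $O(\|u\|_{L^\infty(\Omega)})$ without any hidden logarithm; this is where the compact containment $\overline{\Omega}\subset\subset\widetilde{\Omega}$ supplied by assumption~(A) plays an essential role, since it puts $\widetilde{R}_h\widetilde{u}$ and its error far from $\partial\widetilde{\Omega}$ and thereby localizes the estimate to the regime in which the log factor disappears.
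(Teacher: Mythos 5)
Your architecture matches the paper's: extend $u$ by zero to $\widetilde\Omega$, use the Schatz--Wahlbin interior estimate (which is exactly where $r\ge 2$ removes the logarithm) to control the error of the Ritz projection on the extended domain over $\overline\Omega$, and use Theorem~\ref{THM1} to control the remaining discrete-harmonic difference $w_h-R_hu$. The observation that $w_h-R_hu$ is discrete harmonic is the same mechanism as the paper's bound on $E_2$, merely rearranged through slightly different triangle inequalities. That part of your argument is correct and genuinely the paper's approach.

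However, there is a real gap in the bound for the negative-norm residual $\|\widetilde u-\widetilde R_h\widetilde u\|_{W^{-p,q}(\Omega')}$, which you dismiss as a ``duality argument analogous to Section~\ref{sec: main proof}, via Lemma~\ref{Lemma:Ritz-error-W1p} and Lemma~\ref{lemma: W2p regularity}.'' The analogy fails: in Section~\ref{sec: main proof} the object being approximated was the solution $v$ of \eqref{weak-PDE-v} with an $L^2$ right-hand side, so $v\in W^{2,p}$ and the cited lemmas apply. Here the object is $\widetilde u$ itself, which is only $L^\infty\cap H^1(\widetilde\Omega)$ and has no $W^{2,p}$ regularity whatsoever; a naive $H^2$-duality only produces $Ch\|\nabla\widetilde u\|_{L^2(\widetilde\Omega)}$, which is not controlled by $\|u\|_{L^\infty(\Omega)}$. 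The paper's resolution is nontrivial: after testing with the dual solution $\widetilde\psi$, one integrates by parts element-by-element (not globally), so that $\nabla\widetilde u$ is traded for $\widetilde u$ and the regularity burden is transferred entirely to $\widetilde\psi-\widetilde\psi_h$. This produces a bound of the form $\|u\|_{L^\infty(\Omega)}\big(h^{-1}\|\nabla(\widetilde\psi-\widetilde\psi_h)\|_{L^1(\widetilde\Omega)}+\sum_\tau\|\widetilde\psi-\widetilde\psi_h\|_{W^{2,1}(\tau)}\big)$, and one must then check that both terms are $O(1)$ using energy estimates, trace and inverse inequalities, and a Scott--Zhang interpolant. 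None of this is forced by, or deducible from, the two lemmas you cite; without it your proof of the interior bound does not close.

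You also omit the case $h\ge h_*$, in which the Schatz--Wahlbin interior estimate is not available. The paper handles it by a direct regularized-Green's-function argument. Since the result is claimed uniformly in $h$, this case needs a separate (if elementary) treatment.
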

\begin{proof}
Let $\widetilde u$ be the zero extension of $u$ to the larger domain $\widetilde\Omega$. 
Let $\mathring S_h(\widetilde\Omega)$ be the finite element space subject to the tetrahedral partition of $\widetilde\Omega$ (with zero boundary values), and let $\widetilde{u}_h$ be the Ritz projection of $\widetilde u$ in the domain $\widetilde\Omega$, i.e. 
\begin{equation}\label{Ritz-u-uh}
\int_{\widetilde\Omega} \nabla (\widetilde{u}-\widetilde{u}_h)\cdot\nabla \chi_h \d x=0\quad\forall\,\chi_h\in\mathring S_h(\widetilde\Omega). 
\end{equation}
Since $u=\widetilde{u}$ on $\Omega$, it follows that 
\begin{align}\label{E1+E2}
\|u-u_h\|_{L^\infty(\Omega)}
&=\|\widetilde{u}-u_h\|_{L^\infty(\Omega)} \\
&\le \|\widetilde{u}-\widetilde{u}_h\|_{L^\infty(\Omega)}+\|\widetilde{u}_h-u_h\|_{L^\infty(\Omega)} 
\\
:\!&=E_1+E_2.
\end{align}

By using \cite[Theorem 5.1]{Schatz-Wahlbin-1977} (which requires $r \geq 2$ to remove a logarithmic factor and $h$ sufficiently small, say $h\le h_*$), we have
\begin{align}\label{E1}
E_1\le C\|\widetilde{u}-I_h\widetilde{u}\|_{L^\infty(\Omega')}+C\|\widetilde{u}-\widetilde{u}_h\|_{L^2(\Omega')},
\end{align}
where $\Omega'$ is some intermediate domain satisfying $\Omega\subset\subset\Omega'\subset\subset\widetilde{\Omega}$. Since the Lagrange interpolation operator $I_h$ is stable in the $L^\infty$ norm on $C(\overline\Omega)$, it follows that  
\begin{align}\label{E1-1}
\|\widetilde{u}-I_h\widetilde{u}\|_{L^\infty(\Omega')}\le C\|\widetilde{u}\|_{L^\infty(\Omega)}=C\|{u}\|_{L^\infty(\Omega)} . 
\end{align}
To estimate $\|\widetilde{u}-\widetilde{u}_h\|_{L^2(\Omega')}$, we use a duality argument. Thus,
$$
\|\widetilde{u}-\widetilde{u}_h\|_{L^2(\Omega')}\le\|\widetilde{u}-\widetilde{u}_h\|_{L^2(\widetilde{\Omega})}  = \sup_{{\widetilde{\varphi}\in C^\infty_0(\widetilde{\Omega})} \atop {\|\widetilde{\varphi}\|_{L^2(\widetilde{\Omega})}\le 1}}
\int_{\widetilde\Omega}
(\widetilde{u}-\widetilde{u}_h) \,\widetilde{\varphi} \,\d x . 
$$
In particular, there exists a $\widetilde\varphi \in C^\infty_0(\widetilde{\Omega})$ satisfying 
\begin{align}\label{u-uh-L2-0}
\|\widetilde{\varphi}\|_{L^2(\widetilde{\Omega})}\le 1 
\quad\mbox{and}\quad 
\|\widetilde{u}-\widetilde{u}_h\|_{L^2(\widetilde{\Omega})}  
\le 2 \int_{\widetilde\Omega}
(\widetilde{u}-\widetilde{u}_h) \,\widetilde{\varphi} \,\d x . 
\end{align}
For this $\widetilde\varphi$ we define $\widetilde\psi\in H^1_0(\Omega)$ to be the weak solution of 
\begin{equation}\label{eq: Laplace psi}
\left\{
\begin{aligned}
-\Delta \widetilde{\psi} &= \widetilde\varphi && \mbox{in}\,\,\, \widetilde{\Omega} , \\
    \widetilde{\psi} &= 0 && \mbox{on}\,\,\, \partial\widetilde{\Omega} , 
\end{aligned}
\right. 
\end{equation}
and denote by $\widetilde{\psi}_h\in\mathring S_h(\widetilde\Omega)$ the Ritz projection of $\widetilde\psi$ in $\widetilde\Omega$, i.e. 
\begin{equation}\label{FEM: Laplace psi}
\int_{\widetilde\Omega}\nabla(\widetilde{\psi}-\widetilde{\psi}_h)\cdot\nabla\widetilde{\chi}_h \, \d x=0\quad\forall\,\widetilde{\chi}_h\in \mathring S_h(\widetilde\Omega). 
\end{equation}
If we denote by $\widetilde{\frak{T}}$ the set of tetrahedra in the partition of $\widetilde\Omega$, then testing \eqref{eq: Laplace psi} by $\widetilde{u}-\widetilde{u}_h$ yields 
\begin{align}\label{u-uh-L2}
\begin{aligned}
\int_{\widetilde\Omega}
(\widetilde{u}-\widetilde{u}_h) \,\widetilde{\varphi} \,\d x 
&=\int_{\widetilde\Omega}
\nabla(\widetilde{u}-\widetilde{u}_h) \cdot \nabla\widetilde{\psi} \,\d x \hspace{20pt} \mbox{(here we use integration by parts)} \\ 
&=\int_{\widetilde\Omega}
\nabla(\widetilde{u}-\widetilde{u}_h) \cdot \nabla(\widetilde{\psi}-\widetilde{\psi}_h) \d x 
\quad \mbox{(here we use \eqref{Ritz-u-uh})} \\ 
&=\int_{\widetilde\Omega}
\nabla \widetilde{u}  \cdot \nabla(\widetilde{\psi}-\widetilde{\psi}_h) \d x 
\hspace{41pt}  \mbox{(here we use \eqref{FEM: Laplace psi})} \\
&=\sum_{\tau \in \widetilde{\frak{T}}}
\int_{\tau}
\nabla \widetilde{u}  \cdot \nabla(\widetilde{\psi}-\widetilde{\psi}_h) \d x \\ 
&=- \sum_{\tau\in \widetilde{\frak{T}}} \int_{\tau} 
\widetilde{u} \, \Delta(\widetilde{\psi}-\widetilde{\psi}_h) \,\d x 
+ 
\int_{\partial\tau} \widetilde{u}\, \partial_n (\widetilde{\psi}-\widetilde{\psi}_h) \, \d s \\
&\le C \|\widetilde{u}\|_{L^\infty(\widetilde{\Omega})} 
\sum_{\tau\in \widetilde{\frak{T}}} 
\Big(\|\Delta(\widetilde{\psi}-\widetilde{\psi}_h)\|_{L^1(\tau)} 
+ \|\partial_n(\widetilde{\psi}-\widetilde{\psi}_h)\|_{L^1(\partial\tau)}
\Big) \\
&\le C\|u\|_{L^\infty(\Omega)}\bigg(h^{-1}\|\nabla(\widetilde{\psi}-\widetilde{\psi}_h)\|_{L^1(\widetilde\Omega)}+\sum_{\tau\in \widetilde{\frak{T}}}\|\widetilde{\psi}-\widetilde{\psi}_h\|_{W^{2,1}(\tau)}\bigg) ,
\end{aligned}
\end{align}
where in the last step we have used $\|\widetilde{u}\|_{L^\infty(\widetilde{\Omega})} = \|u\|_{L^\infty(\Omega)}$ and the trace inequality 
$$
\|\partial_n(\widetilde{\psi}-\widetilde{\psi}_h)\|_{L^1(\partial\tau)}\le Ch^{-1}\|\nabla(\widetilde{\psi}-\widetilde{\psi}_h)\|_{L^1(\tau)}+C\|\widetilde{\psi}-\widetilde{\psi}_h\|_{W^{2,1}(\tau)}.
$$
By using a priori energy estimate and $H^2$ regularity, we have 
\begin{align}\label{W11-psi}
\|\nabla(\widetilde{\psi}-\widetilde{\psi}_h)\|_{L^1(\widetilde{\Omega})}
&\le \|\nabla(\widetilde{\psi}-\widetilde{\psi}_h)\|_{L^2(\widetilde{\Omega})} \nonumber \\
&\le \|\nabla(\widetilde{\psi}-I_h\widetilde{\psi})\|_{L^2(\widetilde{\Omega})} \nonumber \\ 
&\le Ch\|\widetilde{\psi}\|_{H^2(\widetilde{\Omega})} \nonumber \\
&\le Ch\|\widetilde{\varphi}\|_{L^2(\widetilde{\Omega})} 
\le Ch.
\end{align}
Let $\tilde{I}_h$ be the Scott-Zhang interpolant. Then by the triangle and inverse inequalities, we have 
$$
\begin{aligned}
&\sum_{\tau\in \widetilde{\frak{T}}}\|\widetilde{\psi}-\widetilde{\psi}_h\|_{W^{2,1}(\tau)} \\ 
&\le C\sum_{\tau\in \widetilde{\frak{T}}}\bigg(\|\widetilde{\psi}-\tilde{I}_h\widetilde{\psi}\|_{W^{2,1}(\tau)}+\|\tilde{I}_h\widetilde{\psi}-\widetilde{\psi}_h\|_{W^{2,1}(\tau)}\bigg)\\
&\le C\bigg(\sum_{\tau\in \widetilde{\frak{T}}}\|\widetilde{\psi}-\tilde{I}_h\widetilde{\psi}\|_{W^{2,1}(\tau)}+h^{-1}\|\tilde{I}_h\widetilde{\psi}-\widetilde{\psi}_h\|_{W^{1,1}(\widetilde{\Omega})}\bigg)\\
&\le C\bigg(\sum_{\tau\in \widetilde{\frak{T}}} \|\widetilde{\psi}-\tilde{I}_h\widetilde{\psi}\|_{W^{2,1}(\tau)}+h^{-1}\|\widetilde{\psi}-\tilde{I}_h\widetilde{\psi}\|_{W^{1,1}(\widetilde{\Omega})}+h^{-1}\|\widetilde{\psi}-\widetilde{\psi}_h\|_{W^{1,1}(\widetilde{\Omega})}\bigg) . 
\end{aligned}
$$
Similarly as \eqref{W11-psi}, we can prove the following estimate: 
$$
h^{-1}\|\widetilde{\psi}-\tilde{I}_h\widetilde{\psi}\|_{W^{1,1}(\widetilde{\Omega})}+h^{-1}\|\widetilde{\psi}-\widetilde{\psi}_h\|_{W^{1,1}(\widetilde{\Omega})}\le C ,
$$
and by using the properties of $\tilde{I}_h$ (cf. \cite[Theorem 4.8.3.8]{Brenner_Scott}), 
$$
\sum_{\tau\in \widetilde{\frak{T}}} \|\widetilde{\psi}-\tilde{I}_h\widetilde{\psi}\|_{W^{2,1}(\tau)}\le C
\sum_{\tau\in \widetilde{\frak{T}}} \|\widetilde{\psi}\|_{W^{2,1}(\tau)}
\le C\|\widetilde{\psi}\|_{H^2(\widetilde{\Omega})}\le C\|\widetilde{\varphi}\|_{L^2(\widetilde{\Omega})}\le C.
$$
Now we substitute these estimates into \eqref{u-uh-L2}. This yields  
\begin{align}\label{E1-2}
\|\widetilde{u}-\widetilde{u}_h\|_{L^2(\widetilde{\Omega})} 
\le C \|u\|_{L^\infty(\Omega)} . 
\end{align}
Then, by substituting \eqref{E1-1} and \eqref{E1-2} into \eqref{E1}, we obtain 
\begin{align}\label{E1-estimate}
E_1 \le C\|u\|_{L^\infty(\Omega)} .
\end{align}
To estimate $E_2$, we use the fact that $\widetilde{u}_h-u_h$ is discrete harmonic in $\Omega$, i.e. 
$$
\int_\Omega 
\nabla(\widetilde{u}_h-u_h)\cdot \nabla\chi_h\,\d x 
= 
\int_\Omega \nabla(\widetilde{u}-u) \cdot \nabla\chi_h\,\d x = 
0 \quad\forall\,\chi_h\in\mathring S_h(\Omega). 
$$
Thus, by the weak discrete maximum principle proved in Theorem \ref{THM1} and using the fact that $u_h=0$ and $\widetilde{u}=0$ on $\partial \Omega$, we have 
\begin{align}\label{E2-estimate}
\begin{aligned}
E_2
&=\|\widetilde{u}_h-u_h\|_{L^\infty(\Omega)} \\
&\le C\|\widetilde{u}_h-u_h\|_{L^\infty(\partial\Omega)} \\
&=C\|\widetilde{u}_h\|_{L^\infty(\partial\Omega)} 
&&\mbox{(use $u_h=0$ on $\partial\Omega$)} \\
&=C\|\widetilde{u}_h-\widetilde{u}\|_{L^\infty(\partial\Omega)}
&&\mbox{(use $\widetilde{u}=0$ on $\partial\Omega$)} \\
&\le C
\|\widetilde{u}_h-\widetilde{u}\|_{L^\infty(\Omega)} \\
&= E_1 ,
\end{aligned}
\end{align}
which has already been estimated. 
Hence, substituting \eqref{E1-estimate} and \eqref{E2-estimate} into 
\eqref{E1+E2}, we obtain 
\begin{align}
\|u-u_h\|_{L^\infty(\Omega)} 
\le C\|u\|_{L^\infty(\Omega)} . 
\end{align}
This completes the proof of Theorem \ref{THM-Ritz} in the case $h\le h_*$ for some positive constant $h_*$.

If $h\ge h_*$ then we pick up a point $x_0\in\bar{\tau}_0$ (in some tetrahedron $\tau_0$) satisfying $|u_h(x_0)|=\|u_h\|_{L^\infty(\Omega)}$. For such $x_0$ we define a regularized Green's function $G$ as the solution of 
\begin{equation}\label{eq: regularized Green}
\begin{aligned}
-\Delta G(x) &= \tilde{\delta}(x), && x\in \Omega,\;  \\
    G(x) &= 0,    && x \in \partial\Omega,
\end{aligned}
\end{equation}
where $\tilde{\delta}\in C^3(\overline\Omega)$ is the regularized Delta function concentrated at $x_0$, satisfying  
${\rm supp}(\tilde{\delta})\subset \bar\tau_0$ and 
\begin{align*}
&\int_{\Omega}\chi_h \widetilde\delta \d x = \chi_h(x_0) ,
\quad\forall\,\chi_h\in \mathring S_h , \\[5pt]
&\|\widetilde\delta \|_{W^{l,p}}
\leq K h^{-l-3(1-1/p)}
\quad\mbox{for}\,\,\,1\leq p\leq\infty,
\,\,\, l=0,1,2,3 . \label{reg-Delta-est}
\end{align*} 
The construction of the function $\widetilde\delta $ can be found in \cite[Lemma 2.2]{ThomeeWahlbin2000}. In particular, the construction of $\widetilde\delta $ can be done in any tetrahedron for the arbitrary mesh size $h$.  

We define $G_h=R_hG\in \mathring S_h$, i.e., 
\begin{equation}\label{eq:g_h}
(\nabla G_h,\nabla \chi_h) = (\tilde{\delta},\chi_h) \quad \forall \chi_h \in \mathring S_h .
\end{equation}
The finite element function $G_h$ defined by the equation above satisfies the following standard energy estimate:
$$
\| G_h\|_{H^{1}(\Omega)} \le C\|\tilde \delta \|_{L^2(\Omega)} . 
$$
Then using the Galerkin orthogonality, integration by parts, we obtain
\begin{equation}\label{eq: initial estimate for u_h(x_0)}
\begin{aligned}
u_h(x_0)
&=(\nabla u_h,\nabla G_h)=(\nabla u,\nabla G_h)=\sum_{\tau \in \mathfrak{T}}\left[ (u,\partial_n G_{h})_{\partial\tau}+(u,-\Delta G_h)_\tau \right]\\
&\le \|u\|_{L^\infty(\Omega)}\sum_{\tau \in \mathfrak{T}}\left(\|\partial_n G_{h}\|_{L^1(\partial\tau)}+\|\Delta G_h\|_{L^1(\tau)}\right).
\end{aligned}
\end{equation}
Now, for $h\geq h_*$, using the trace and inverse inequality we have
\begin{align*}
\sum_{\tau \in \mathfrak{T}}\left[\|\partial_n G_{h}\|_{L^1(\partial\tau)}+\|\Delta G_h\|_{L^1(\tau)}\right]&\le Ch^{-1}\sum_{\tau \in \mathfrak{T}}\|\nabla G_h\|_{L^1(\tau)}\\ &
\le Ch^{-1}\| G_h\|_{W^{1,1}(\Omega)}\\
&\le Ch^{-1}\| G_h\|_{H^{1}(\Omega)}\\
&\le Ch^{-1}\|\tilde \delta \|_{L^2(\Omega)}\le Ch_*^{-5/2},
\end{align*}
since $\| \tilde \delta \|_{L^2(\Omega)}\le Ch^{-3/2}$ and $h\ge h_*$.

Combining the two cases $h\le h_*$ and $h\ge h_*$, we obtain the desired result of Theorem \ref{THM-Ritz}.
\end{proof}

\section{Conclusion}

In this article, we have proved the weak maximum principle of finite element method (Theorem \ref{THM1}). The main difference between the current proof and the proof in \cite{Schatz-1980} for two-dimensional polygons is that we have used $L^p$ estimates in place of some $L^2$ estimates in section \ref{sec: main proof}, including \eqref{interior-H1}, \eqref{interior-harmonic}, \eqref{Lp-interpl}, \eqref{Lp-interpl2}, \eqref{Lp-interpl3}, \eqref{Lp-interpl4} and \eqref{Lp-interpl5}. As an application of the weak maximum principle of finite element methods, we have presented an $L^\infty$-stability of Ritz projection (Theorem \ref{THM-Ritz}) by utilizing the argument in \cite[Theorem 5.1]{Schatz-1980}. 
  
\section*{Acknowledgement}

We thank the anonymous referees for the valuable comments and suggestions.

\end{document}